\newcommand*\xoverline[2][0.75]{%
	\sbox{\myboxA}{$\m@th#2$}%
	\setbox\myboxB\null
	\ht\myboxB=\ht\myboxA%
	\dp\myboxB=\dp\myboxA%
	\wd\myboxB=#1\wd\myboxA
	\sbox\myboxB{$\m@th\overline{\copy\myboxB}$}
	\setlength\mylenA{\the\wd\myboxA}
	\addtolength\mylenA{-\the\wd\myboxB}%
	\ifdim\wd\myboxB<\wd\myboxA%
	\rlap{\hskip 0.5\mylenA\usebox\myboxB}{\usebox\myboxA}%
	\else
	\hskip -0.5\mylenA\rlap{\usebox\myboxA}{\hskip 0.5\mylenA\usebox\myboxB}%
	\fi}
\definecolor{mycolor1}{RGB}{237,227,135}
\definecolor{mycolor2}{RGB}{237,237,237}
\definecolor{mycolor3}{RGB}{59,32,12}
\definecolor{mycolor4}{RGB}{222,129,0}
\definecolor{mycolor5}{RGB}{204,252,98}
\newtheorem{theorem}{Theorem}[section]
\newtheorem{lemma}[theorem]{Lemma}
\newtheorem{proposition}[theorem]{Proposition}
\theoremstyle{definition}
\newtheorem{definition}[theorem]{Definition}
\newtheorem{problem}{Problem}
\crefname{problem}{problem}{problems}
\newtheorem{assumption}{Assumption}
\newtheorem{corollary}[theorem]{Corollary}
\theoremstyle{remark}
\newtheorem{remark}[theorem]{Remark}
\numberwithin{equation}{section}
\newcommand{\Z}{\mathbb{Z}}
\newcommand{\R}{\mathbb{R}}
\newcommand{\Brackets}[1]{\left( #1 \right)}
\newcommand{\SquareBrackets}[1]{\left[ #1\right]}
\newcommand{\Braces}[1]{\left\{ #1\right\}}
\newcommand{\FuncAction}[2]{\left\langle #1,#2 \right\rangle}
\newcommand{\Norm}[1]{\left\lVert #1 \right\rVert}
\newcommand{\Seminorm}[1]{\left\lvert #1 \right\rvert}
\newcommand{\vertiii}[1]{{\left\vert\kern-0.25ex\left\vert\kern-0.25ex\left\vert #1\right\vert\kern-0.25ex\right\vert\kern-0.25ex\right\vert}}
\newcommand{\dx}{\mathrm{d}}
\newcommand{\overbar}[1]{\mkern 1.5mu\overline{\mkern-1.5mu#1\mkern-1.5mu}\mkern 1.5mu} 
\newcommand{\Div}{\mathrm{div}}
\title{Convergence Rate of Multiscale Finite Element Method for Various Boundary Problems}
\author{Changqing Ye (yechangqing@lsec.cc.ac.cn)\footnote{LSEC, ICMSEC, Academy of Mathematics and Systems Science, Chinese Academy of Sciences, Beijing 100190, China, and School of Mathematical Sciences, University of Chinese Academy of Sciences, Beijing 100049, China} \\Dong hao (donghao@mail.nwpu.edu.cn)\footnote{School of Mathematics and Statistics, Xidian University, Xi’an 710071, PR China} \\Junzhi Cui (cjz@lsec.cc.ac.cn)\footnote{LSEC, ICMSEC, Academy of Mathematics and Systems Science, Chinese Academy of Sciences, Beijing 100190, China}}
\date{\today \footnote{Available \href{https://doi.org/10.1016/j.cam.2020.112754}{online}}}
\begin{document}

\maketitle
\begin{abstract}
In this paper, we examine the effectiveness of classic multiscale finite element method (MsFEM) (Hou and Wu, 1997; Hou et al., 1999) for mixed Dirichlet-Neumann, Robin and hemivariational inequality boundary problems. Constructing so-called boundary correctors is a common technique in existing methods to prove the convergence rate of MsFEM, while we think not reflects the essence of those problems. Instead, we focus on the first-order expansion structure. Through recently developed estimations in homogenization theory, our convergence rate is provided with milder assumptions and in neat forms. 
\end{abstract}

\section{Introduction}
Multiscale problems are ubiquitous in science and engineering, and the most common manifestation is PDEs with highly oscillatory coefficients. For examples, physical field equations for modeling composite material, and Darcy's flows in porous media. Directly solving those problems by universal methods such as finite element method (FEM) is still impractical even with a huge development on computational power. Therefore, the purposes of multiscale computation are exploiting advanced computer architecture, capturing small-scale information while keeping the main workload bearable.

It has been decades since Hou et al. developed a multiscale finite element method (cf. \cite{Hou1997,Hou1999}). Simply put, MsFEM solves a multiscale problem on a coarse mesh while fine-scale properties are revealed in finite element bases, here ``coarse'' is comparing to the original characteristic scale. Hence, a great reduction in total freedoms will be achieved. The construction of MsFEM basis functions is through locally solving PDEs which are determined by the leading order differential operator. Originally, the boundary conditions for those bases are linear to guarantee conformity, later an over-sampling technique together with nonconforming FEM numerical analysis was introduced to weaken the influence of so-called resonance error (see \cite{Efendiev2000,YalchinEfendiev2009}). Recently, a generalized MsFEM performed a further step (\cite{Efendiev2013}), in this method multiscale basis functions are designed by a two-stage process, and this method has been successfully applied to high contrast flow problems (\cite{Chung2014}).

While MsFEM has been broadly applied to different scenarios, to our knowledge, there are several numerical analysis results remaining unproven, and this is what our paper mainly concerns. Specifically, an estimation for MsFEM solutions on Dirichlet problems was given in \cite{zhimingchenhaijunwu1991} and states that the error is
\[
C\Brackets{h+\epsilon}\Norm{f}_{L^2(\Omega)}+C\Brackets{\sqrt{\frac{\epsilon}{h}}+\sqrt{\epsilon}}\Norm{u_0}_{W^{1,\infty}(\Omega)}.
\]
However, a bounded gradient assumption for the homogenized solution $u_0$ seems too strong. The proof relies on a boundary corrector $\theta_\epsilon$, which is intuitive to get constructed on Dirichlet problems while slightly complicate on Neumann problems. Chen and Hou did show a proper way in \cite{Chen2002} to define boundary corrector for pure Neumann problems. Nevertheless, their method is highly technical and strict for the homogenized solution, also difficult to extend to Robin or nonlinear boundary problems.

Due to recently developed theories for periodic homogenization \cite{Shen2016}, we can directly build the estimation from variational form, rather than construct boundary correctors first. This thinking also motivates our work \cite{Ye2019} on boundary hemivariational inequality{\textemdash}a nonlinear boundary condition that frequently appears in the frictional contact modeling. With those estimations prepared, we can now clearly and rigorously examine the effectiveness of classic MsFEM for various boundary problems.

We arrange the following sections as: in \cref{sec:pre} we first introduce the notations, general settings, and model problems, then review the homogenization theory, highlight on the estimations for later analysis, finally a concise description of MsFEM is also provided; in \cref{sec:main} we progressively prove our main results, with the best effort to keep selfconsistency; conclusion remarks are included in \cref{sec:conclusion}.

\section{Preliminaries}\label{sec:pre}
For simplicity, we consider 2D problems through the full text. We reserve $\Omega$ for a domain (bounded and open set) with Lipschitz boundary and $d$ for spacial dimension ($d=2$). The Einstein summation convention is adopted, means summing repeated indexes from $1$ to $d$. The Sobolev spaces $W^{k,p}$ and $H^{k}$ are defined as usual (see e.g., \cite{Brenner2008}) and we abbreviate the norm and seminorm of Sobolev space $H^k(\Omega)$ as $\Norm{ \cdot}_{k,\Omega}$ and $\Seminorm{\cdot}_{k,\Omega}$. 

\subsection{Homogenization Theory}
In this subsection, we review the homogenization theory and related estimations for our model problems. 
\begin{definition}[1-periodicity] 
A (vector/matrix value) function $f$ is called 1-periodic, if
\[
f(\bm{x}+\bm{z}) = f(\bm{x}) ~~~~ \forall \bm{x} \in \R^d \text{ and } \forall \bm{z} \in \Z^d .
\]
\end{definition}
A 1-periodic (vector/matrix value) function $f(\bm{x})$ with $\epsilon$ superscripted means a scaling on $\bm{x}$ as $f^\epsilon(\bm{x}) = f(\bm{x}/\epsilon)$. 
\begin{definition}
The coefficient matrix $A(\bm{x})$ is called symmetric and uniformly elliptic if
\begin{equation}
\begin{aligned}
& A_{ij}(\bm{x}) = A_{ji}(\bm{x}); \\
& \kappa_1 \Seminorm{\xi}^2 \leq A_{ij}(\bm{x})\xi_i \xi_j \leq \kappa_2 \Seminorm{\xi}^2 ~~~~ \text{for a.e. } \bm{x} \text{~ in ~} \bm{x} \text{'s domain and } \xi \in \R^d.
\end{aligned}
\end{equation}
\end{definition}

\begin{problem}[Mixed Dirichlet-Neumann problem] \label{pro:a}
Split $\partial \Omega$ into the two disjointed parts $\Gamma_D, \Gamma_N$. The problem states as:
\[
\left\{
\begin{aligned}
-\Div\Brackets{A^\epsilon \nabla u_\epsilon} = f &\text{~ in ~} \Omega \\
u_\epsilon = 0 &\text{~ on ~} \Gamma_D \\
\bm{n} \cdot A^\epsilon \nabla u_\epsilon = g &\text{~ on ~} \Gamma_N
\end{aligned}
\right. .
\]
Its variational form is:
\[
\left\{
\begin{aligned}
& \text{Find~} u_\epsilon \in H^1_{0, \Gamma_D}(\Omega), \text{~ s.t. } \forall v \in H^1_{0, \Gamma_D}(\Omega) \\
& \int_\Omega A^\epsilon \nabla u_\epsilon \cdot v = \int_\Omega f v + \int_{\Gamma_N} g v
\end{aligned}
\right. .
\]
\end{problem}

For simplicity, we assume the source term $f$ belongs to $L^2(\Omega)$, and the boundary term $g \in L^2(\Gamma_N)$.
\begin{problem}[Robin problem] \label{pro:b} 
We consider a following problem:
\[
\left\{
\begin{aligned}
-\Div\Brackets{A^\epsilon \nabla u_\epsilon} = f &\text{~ in ~} \Omega \\
\bm{n} \cdot A^\epsilon \nabla u_\epsilon + \alpha u_\epsilon = g &\text{~ on ~} \partial \Omega
\end{aligned} 
\right. ,
\]
with its variational form:
\[
\left\{
\begin{aligned}
& \text{Find~} u_\epsilon \in H^1(\Omega), \text{~ s.t. } \forall v \in H^1(\Omega) \\
& \int_\Omega A^\epsilon \nabla u_\epsilon \cdot v + \int_{\partial \Omega} \alpha u_\epsilon v= \int_\Omega f v + \int_{\partial \Omega} g v
\end{aligned}
\right. .
\]
\end{problem}
We assume $0 < \alpha_1 \leq \alpha \leq \alpha_2$ to keep the coercivity of bilinear form. On a Robin problem, an energy norm can be defined as $\vertiii{v}^2 = \int_\Omega A^\epsilon \nabla v \cdot \nabla v + \int_{\partial \Omega} \alpha v^2$, which is equivalent to $\Norm{v}_{1,\Omega}$.

Before introducing hemivariational boundary inequality problems, several notations and definitions will be provided first:
\begin{definition}[See \cite{Denkowski2003}]
Let $\varphi: X \rightarrow \R$ be a locally Lipschitz function. For $x, h \in X$, the \emph{generalized directional derivative} of $\varphi$ at $x$ along the direction $h$, denoted by $\varphi^0(x; h)$ is defined by
\[
\varphi^0(x; h) \coloneqq \limsup_{y\rightarrow x, \lambda \downarrow 0} \frac{\varphi(y+\lambda h)-\varphi(y)}{\lambda} = \inf_{\epsilon, \delta>0} \sup_{\substack{\Norm{x-y}_X<\epsilon \\ 0<\lambda< \delta}} \frac{\varphi(y+\lambda h)-\varphi(y)}{\lambda} .
\]
The \emph{generalized subdifferential} of $\varphi$ at $x \in X$, is the nonempty set $\partial \varphi (x) \subset X^*$ defined by
\[
\partial \varphi(x) \coloneqq \{ x^* \in X^*: \FuncAction{x^*}{h} \leq \varphi^0(x; h), \forall h \in X \}.
\] 
\end{definition}
On a hemivariational inequality problem, we decompose boundary $\partial \Omega$ into disjointed parts $\Gamma_D$, $\Gamma_N$ and $\Gamma_C$ to impose different conditions. Then
 
\begin{problem}[Boundary hemivariational inequality] \label{pro:c} 
Let $j$ be a locally Lipschitz function on $L^2(\Gamma_C)$, and $\gamma_j$ be the trace operator $H^1_{0, \Gamma_D}(\Omega) \mapsto L^2(\Gamma_C)$, the problem states:
\[
\left\{
\begin{aligned}
-\Div(A^\epsilon(\bm{x})\nabla u_\epsilon) = f ~~~~ &\text{in} ~~ \Omega \\
u_\epsilon = 0 ~~~~ &\text{on} ~~ \Gamma_D \\
\bm{n} \cdot A^\epsilon \nabla u_\epsilon = g ~~~~ &\text{on} ~~ \Gamma_N \\
-\bm{n} \cdot A^\epsilon \nabla u_\epsilon \in \partial j(\gamma_j u_\epsilon) ~~~~ &\text{on} ~~ \Gamma_C
\end{aligned}.
\right. 
\]
It has a following hemivariational form:
\[
\left\{
\begin{aligned}
&\text{Find } u_\epsilon \in H^1_{0,\Gamma_D}(\Omega), \text{ s.t. } \forall v \in H^1_{0,\Gamma_D}(\Omega) \\
& \int_\Omega A^\epsilon \nabla u_\epsilon \cdot \nabla v + j^0(\gamma_j u_\epsilon; \gamma_j v) \geq \int_\Omega f v+\int_{\Gamma_N} gv 
\end{aligned}
\right. .
\]
\end{problem}

To make this problem solvable, we need several assumptions (\cite{Han2019,Han2017}):
\begin{assumption} \label{ass:hemi}
There exist constants $c_0,c_1,\alpha_j$, such that:
\[
\begin{aligned}
&\Norm{x^*}_{V_j^*} \leq c_0+c_1\Norm{x}_{V_j} \forall x \in V_j, \forall x^* \in \partial j(x); \\
&j^0(x_1; x_2-x_1)+j^0(x_2; x_1-x_2) \leq \alpha_j \Norm{x_1-x_2}_{V_j}^2 \forall x_1, x_2 \in V_j .
\end{aligned}
\]
Let $c_j=\Norm{\gamma_j}_{V\rightarrow V_j}$ be the operator norm, there exists that, 
\[
\Delta \coloneqq \kappa_1 - \alpha_j c_j^2 > 0 .
\]
Here we use $V_j$, $V$ for abbreviations of $L^2(\Gamma_C)$ and $H^1_{0, \Gamma_D}(\Omega)$. 
\end{assumption}

The fascinating result of homogenization theory is that $u_\epsilon$ can converge as $\epsilon \rightarrow 0$, and this limitation is called the homogenized solution $u_0$ which satisfying a homogenized equation. One interesting fact is the coefficients corresponded to this equation is not the averages of $A(\bm{y})$ in periodic cell but elaborate values $\hat{A}_{ij}$. Denote $Q=(-1/2, 1/2)^d$ and $W_\sharp^{k,p}(Q)$ as the completion of smooth 1-periodic function by $W^{k,p}(Q)$ norm, similarly for $H^k_\sharp(Q)$.

\begin{definition}[Correctors and homogenized coefficients]
We denote $\{N_l(\bm{y}\}_{l=1}^d$ as correctors for $A^\epsilon$, and correctors satisfy a group of PDEs with periodic boundary conditions:
\[
\left\{
\begin{aligned}
& -\Div\Brackets{A(\bm{y})\nabla N_l}=\Div (Ae_l)=\partial_i A_{il}(\bm{y}) ~~~~ \text{ in }Q\\
& N_l(\bm{y}) \in H^1_\sharp(Q) \text{ and } \int_Q N_l = 0
\end{aligned}
\right. .
\]
The homogenized coefficients are defined by
\[
\hat{A}_{il}=\frac{1}{\Seminorm{Q}}\int_Q \Brackets{A_{il}+A_{ij}\partial_j N_l} \dx \bm{y} \coloneqq \fint_Q \Brackets{A_{il}+A_{ij}\partial_j N_l} \dx \bm{y} .
\]
\end{definition} 
We have a theorem:
\begin{theorem}[Proofs in \cite{Jikov1994,Shen2016,Ye2019}]
$u_\epsilon$ converges to $u_0$ \emph{weakly} in \cref{pro:a,pro:b,pro:c} As $\epsilon \rightarrow 0$, and $u_0$ is determined by the same equations (variational or hemivariational forms) with substituting $A^\epsilon(\bm{x})$ with $\hat{A}$.  
\end{theorem}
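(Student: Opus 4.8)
The plan is to follow the classical energy method of Tartar---the method of oscillating test functions---treating the three problems uniformly for the interior operator and handling the boundary couplings separately. First I would establish uniform-in-$\epsilon$ a priori bounds: testing each (hemi)variational form against $u_\epsilon$ itself and invoking uniform ellipticity (together with the coercivity-preserving hypotheses $\alpha \geq \alpha_1 > 0$ for \cref{pro:b} and $\Delta > 0$ from \cref{ass:hemi} for \cref{pro:c}) yields $\Norm{u_\epsilon}_{1,\Omega} \leq C$ with $C$ independent of $\epsilon$. Hence the fluxes $\xi_\epsilon \coloneqq A^\epsilon \nabla u_\epsilon$ are bounded in $L^2(\Omega)^d$. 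By weak compactness in the reflexive spaces I extract a subsequence with $u_\epsilon \rightharpoonup u^*$ in $H^1(\Omega)$ and $\xi_\epsilon \rightharpoonup \xi^*$ in $L^2(\Omega)^d$; since $H^1_{0,\Gamma_D}(\Omega)$ is weakly closed, $u^*$ inherits the Dirichlet constraint in \cref{pro:a,pro:c}.

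The heart of the argument is identifying the weak flux limit as $\xi^* = \hat{A}\nabla u^*$. For this I would introduce the oscillating test functions $\psi_\epsilon^k(\bm{x}) = x_k + \epsilon N_k(\bm{x}/\epsilon)$ built from the correctors. A direct computation using the corrector equation shows $-\Div\Brackets{A^\epsilon \nabla \psi_\epsilon^k} = 0$, while $\psi_\epsilon^k \rightharpoonup x_k$ in $H^1(\Omega)$ and $A^\epsilon \nabla \psi_\epsilon^k \rightharpoonup \hat{A} e_k$ in $L^2$ (the weak limit being the cell-average $\fint_Q(A_{ik}+A_{ij}\partial_j N_k)$). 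Exploiting the symmetry of $A^\epsilon$ to write the pointwise identity
\[
\xi_\epsilon \cdot \nabla \psi_\epsilon^k = A^\epsilon \nabla \psi_\epsilon^k \cdot \nabla u_\epsilon,
\]
the div-curl lemma (compensated compactness) applies to each side: the left pairs the divergence-constrained $\xi_\epsilon$ (with $\Div\xi_\epsilon = -f$ fixed) against the curl-free $\nabla\psi_\epsilon^k$, and the right pairs the divergence-free $A^\epsilon\nabla\psi_\epsilon^k$ against the curl-free $\nabla u_\epsilon$. Passing to the limit against any $\phi \in C_c^\infty(\Omega)$ gives $\xi^* \cdot e_k = \hat{A}e_k \cdot \nabla u^*$ for each $k$, and the symmetry of $\hat{A}$ delivers $\xi^* = \hat{A}\nabla u^*$ a.e.; passing to the limit in $-\Div\xi_\epsilon = f$ then shows $u^*$ solves the homogenized interior equation.

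It remains to pass to the limit in the boundary terms. For \cref{pro:a} the datum $\int_{\Gamma_N} g v$ is independent of $\epsilon$, so the homogenized variational form follows at once. For \cref{pro:b} the Robin term requires $\int_{\partial\Omega}\alpha u_\epsilon v \to \int_{\partial\Omega}\alpha u^* v$; this holds because the trace map $H^1(\Omega) \to L^2(\partial\Omega)$ is compact (Rellich--Kondrachov composed with the trace theorem), giving $u_\epsilon \to u^*$ strongly in $L^2(\partial\Omega)$. The same compactness of $\gamma_j : V \to V_j$ is the lever for \cref{pro:c}.

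The \emph{main obstacle} is \cref{pro:c}, where one must pass to the limit in an \emph{inequality} driven by the nonsmooth, generally nonconvex functional $j^0$. Here I would combine the interior convergence $\int_\Omega \xi_\epsilon\cdot\nabla v \to \int_\Omega \hat{A}\nabla u^* \cdot \nabla v$ with the strong convergence $\gamma_j u_\epsilon \to \gamma_j u^*$ in $V_j$ furnished by the compact trace, and invoke the upper semicontinuity of $(\eta,\zeta)\mapsto j^0(\eta;\zeta)$ to obtain $\limsup_\epsilon j^0(\gamma_j u_\epsilon; \gamma_j v) \leq j^0(\gamma_j u^*; \gamma_j v)$. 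Taking $\limsup$ in the hemivariational inequality then recovers the homogenized inequality for $u^*$. Finally, each homogenized problem has a unique solution under the stated hypotheses---strict ellipticity of $\hat{A}$ and $\alpha>0$ for \cref{pro:a,pro:b}, and $\Delta > 0$ for \cref{pro:c}---so the limit is independent of the extracted subsequence and the whole family $u_\epsilon$ converges weakly to $u_0 = u^*$.
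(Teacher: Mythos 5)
Your proposal is correct, but note the paper itself contains no proof of this theorem: it is stated with proofs deferred to \cite{Jikov1994,Shen2016,Ye2019}. Your argument---uniform energy bounds, Tartar's oscillating test functions $x_k+\epsilon N_k^\epsilon$ with the div--curl lemma to identify the flux limit $\hat{A}\nabla u^*$, compactness of the trace for the Robin term, upper semicontinuity of $j^0$ together with the compact trace for the hemivariational inequality, and uniqueness of the homogenized solution (using $\Delta>0$ for \cref{pro:c}) to upgrade subsequence convergence to convergence of the whole family---is precisely the classical route taken in those references, and it is sound.
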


From the point of computation, weak convergence is inadequate because it does not provide quantitative information. Hence an asymptotic expansion is developed and successfully applied to various problems (\cite{Allaire2002,Bensoussan2011}). Here we only consider a first-order expansion $u_{\epsilon,1} \coloneqq u_0+\epsilon N_l^\epsilon \partial_l u_0$.

\begin{remark}
From the expression of $u_{\epsilon, 1}$, We can conjecture that twice differentiability of $u_0$ is the least regularity for $u_{\epsilon,1} \in H^1(\Omega)$, and $N_l(\bm{y}) \in W^{1,\infty}_\sharp(Q)$ is necessary for the square-integrability of $u_{\epsilon, 1}$. Actually, those two assumptions will frequently appear in our analysis.
\end{remark}

We have a theorem to justify the first-order expansion and the corresponding proofs can be found in \cite{Shen2016,Ye2019}:
\begin{theorem}\label{thm:ehalf}
Assume $u_0 \in H^2(\Omega)$ and $N_l(\bm{y}) \in W^{1,\infty}_\sharp(Q)$. For \cref{pro:a}, we have:
\[
\Seminorm{u_\epsilon - u_{\epsilon, 1}}_{1,\Omega} \leq C\Brackets{\kappa_1, \kappa_2, \Omega, \max_l\Norm{N_l}_{W^{1,\infty}_\sharp(Q)}} \epsilon^{1/2} \Norm{u_0}_{2, \Omega} ;
\]
for \cref{pro:b}:
\[
\vertiii{u_\epsilon - u_{\epsilon, 1}} \leq C\Brackets{\kappa_1, \kappa_2, \alpha_1, \alpha_2, \Omega, \max_l\Norm{N_l}_{W^{1,\infty}_\sharp(Q)}} \epsilon^{1/2} \Norm{u_0}_{2, \Omega} ;
\]
under \cref{ass:hemi}, for \cref{pro:c}:
\[
\Seminorm{u_\epsilon - u_{\epsilon, 1}}_{1,\Omega} \leq C\Brackets{\kappa_1, \kappa_2, \Delta, \Omega, \max_l\Norm{N_l}_{W^{1,\infty}_\sharp(Q)}} \epsilon^{1/2} \Norm{u_0}_{2, \Omega}.
\]
\end{theorem}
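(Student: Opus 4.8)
The plan is to run a single energy argument for all three problems, built directly from the (hemi)variational form rather than from a boundary corrector, with the square-root rate coming entirely from a boundary-layer estimate on $\nabla u_0$. Write $r_\epsilon := u_\epsilon - u_{\epsilon,1}$ and $P_{il} := A_{il}+A_{ij}\partial_j N_l$, so that $\hat{A}_{il}=\fint_Q P_{il}$. Using $\epsilon\nabla\Brackets{N_l^\epsilon}=\Brackets{\nabla N_l}^\epsilon$, a direct expansion of the flux gives, in the $i$-th slot,
\[
\Brackets{\hat{A}\nabla u_0 - A^\epsilon\nabla u_{\epsilon,1}}_i = \Brackets{\hat{A}_{il}-P_{il}^\epsilon}\partial_l u_0 - \epsilon A_{ij}^\epsilon N_l^\epsilon\partial_j\partial_l u_0 .
\]
The core of everything is the single \emph{variational error bound}
\[
\Seminorm{\int_\Omega\Brackets{\hat{A}\nabla u_0 - A^\epsilon\nabla u_{\epsilon,1}}\cdot\nabla\phi}\leq C\epsilon^{1/2}\Norm{u_0}_{2,\Omega}\Seminorm{\phi}_{1,\Omega}\qquad\forall\phi\in H^1(\Omega) ,
\]
which I would establish first; the rest is bookkeeping that feeds it into the coercivity of each problem.

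To prove the variational error bound, note that the second term above is already $O(\epsilon)$ by $N_l\in W^{1,\infty}_\sharp(Q)$ and uniform ellipticity. For the principal term set $B_{il}:=P_{il}-\hat{A}_{il}$, which is $1$-periodic, mean-zero, and divergence-free in $i$ — the last being precisely the corrector equation $\partial_i P_{il}=0$. Hence there exist $1$-periodic flux correctors $E_{kil}\in H^1_\sharp(Q)$, antisymmetric in $(k,i)$, with $B_{il}=\partial_k E_{kil}$, and they are bounded when $N_l\in W^{1,\infty}_\sharp(Q)$. I would then insert an interior cutoff $\theta_\epsilon$ vanishing in the $\epsilon$-layer $\Omega_\epsilon:=\Braces{\bm{x}\in\Omega:\mathrm{dist}(\bm{x},\partial\Omega)<\epsilon}$ with $\Seminorm{\nabla\theta_\epsilon}\lesssim\epsilon^{-1}$, and split the principal integral over $\theta_\epsilon$ and $1-\theta_\epsilon$. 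On the layer part the oscillatory factor $B^\epsilon$ is merely bounded, so that piece is controlled directly by $\Norm{\nabla u_0}_{0,\Omega_\epsilon}\Seminorm{\phi}_{1,\Omega}$; on the interior part I substitute $B_{il}^\epsilon=\epsilon\partial_k\Brackets{E_{kil}^\epsilon}$ and integrate by parts, the cutoff removing the $\partial\Omega$ integral and the antisymmetry of $E$ allowing a further integration by parts that eliminates the Hessian of $\phi$ (so only $\phi\in H^1$ enters), leaving an interior $O(\epsilon)$ piece plus a $\nabla\theta_\epsilon$-piece again supported in the layer. The rate is then closed by the boundary-layer estimate $\Norm{\nabla u_0}_{0,\Omega_\epsilon}\leq C\epsilon^{1/2}\Norm{u_0}_{2,\Omega}$, valid for any $H^2$ function. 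This is exactly where $u_0\in H^2$ (not $W^{1,\infty}$) suffices, and where the $\epsilon^{1/2}$ rather than $\epsilon$ is born.

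To close each problem, I would feed this bound into the appropriate coercivity. For \cref{pro:b} there is no Dirichlet part, so $r_\epsilon\in H^1(\Omega)$ is itself admissible; subtracting the Robin forms of $u_\epsilon$ and $u_0$ and testing with $r_\epsilon$ yields $\vertiii{r_\epsilon}^2=\int_\Omega A^\epsilon\nabla r_\epsilon\cdot\nabla r_\epsilon+\int_{\partial\Omega}\alpha r_\epsilon^2$ equal to the variational error bound at $\phi=r_\epsilon$ plus the boundary term $-\epsilon\int_{\partial\Omega}\alpha N_l^\epsilon\partial_l u_0\, r_\epsilon$, the latter being $O(\epsilon)$ by $\alpha\le\alpha_2$ and the trace bound $\Norm{\nabla u_0}_{0,\partial\Omega}\leq C\Norm{u_0}_{2,\Omega}$, giving the claim. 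For \cref{pro:a} the obstruction is that $r_\epsilon$ has trace $-\epsilon N_l^\epsilon\partial_l u_0$ on $\Gamma_D$, so it is not in $H^1_{0,\Gamma_D}(\Omega)$; instead of a boundary corrector I would use the explicit extension $\Phi_\epsilon := \epsilon\zeta_\epsilon N_l^\epsilon\partial_l u_0$, with $\zeta_\epsilon$ a cutoff equal to $1$ on $\partial\Omega$ and supported in $\Omega_\epsilon$, so that $\phi:=r_\epsilon+\Phi_\epsilon\in H^1_{0,\Gamma_D}(\Omega)$ and, by the same layer estimate, $\Seminorm{\Phi_\epsilon}_{1,\Omega}\leq C\epsilon^{1/2}\Norm{u_0}_{2,\Omega}$. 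Then coercivity $\kappa_1\Seminorm{\phi}_{1,\Omega}^2\le\int_\Omega A^\epsilon\nabla\phi\cdot\nabla\phi$, combined with the variational error bound and $\int_\Omega A^\epsilon\nabla\Phi_\epsilon\cdot\nabla\phi\le\kappa_2\Seminorm{\Phi_\epsilon}_{1,\Omega}\Seminorm{\phi}_{1,\Omega}$, forces $\Seminorm{\phi}_{1,\Omega}\le C\epsilon^{1/2}\Norm{u_0}_{2,\Omega}$, whence $\Seminorm{r_\epsilon}_{1,\Omega}\le\Seminorm{\phi}_{1,\Omega}+\Seminorm{\Phi_\epsilon}_{1,\Omega}$ finishes the case.

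For \cref{pro:c} I would represent both inequalities by subgradient selections $\xi_\epsilon\in\partial j(\gamma_j u_\epsilon)$, $\xi_0\in\partial j(\gamma_j u_0)$ (whose existence is part of solvability under \cref{ass:hemi}), subtract to get $\int_\Omega A^\epsilon\nabla r_\epsilon\cdot\nabla v=\int_\Omega(\hat{A}\nabla u_0-A^\epsilon\nabla u_{\epsilon,1})\cdot\nabla v-\FuncAction{\xi_\epsilon-\xi_0}{\gamma_j v}$, and test with $\phi=r_\epsilon+\Phi_\epsilon$ as above. The decisive point is that $\zeta_\epsilon\equiv1$ on $\Gamma_C$ makes $\gamma_j\phi=\gamma_j(u_\epsilon-u_0)$ exactly, so that $-\FuncAction{\xi_\epsilon-\xi_0}{\gamma_j\phi}\le j^0(\gamma_j u_\epsilon;\gamma_j u_0-\gamma_j u_\epsilon)+j^0(\gamma_j u_0;\gamma_j u_\epsilon-\gamma_j u_0)\le\alpha_j\Norm{\gamma_j\phi}_{V_j}^2\le\alpha_j c_j^2\Seminorm{\phi}_{1,\Omega}^2$ by the second bound of \cref{ass:hemi}; coercivity then gives $\Delta\Seminorm{\phi}_{1,\Omega}^2=(\kappa_1-\alpha_j c_j^2)\Seminorm{\phi}_{1,\Omega}^2\le C\epsilon^{1/2}\Norm{u_0}_{2,\Omega}\Seminorm{\phi}_{1,\Omega}$, which is exactly why the constant depends on $\Delta$. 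The main obstacle throughout lies inside the variational error bound: squeezing the oscillatory flux term to $O(\epsilon^{1/2})$ while keeping the test function in $H^1$ only — i.e.\ performing the flux-corrector integration by parts so that both its boundary integral and its Hessian-of-$\phi$ term are avoided (by the cutoff and by the antisymmetry of $E$, respectively), confining all residual error to the $\epsilon$-thin layer where the boundary-layer estimate on $\nabla u_0$ applies. This is precisely the maneuver that replaces the classical boundary corrector and renders $u_0\in H^2$ sufficient.
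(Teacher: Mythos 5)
Your proposal is correct and follows essentially the same route as the paper: the paper itself omits the proof of \cref{thm:ehalf} and defers to \cite{Shen2016,Ye2019}, whose argument — flux (dual) correctors $E_{kil}$ built from the mean-zero, divergence-free field $P_{il}-\hat{A}_{il}$, antisymmetry to avoid Hessians of the test function, boundary-layer cutoffs, and the layer estimate $\Norm{\nabla u_0}_{0,\Omega_\epsilon}\leq C\epsilon^{1/2}\Norm{u_0}_{2,\Omega}$ — is exactly what you reconstruct. Your test function $\phi=r_\epsilon+\Phi_\epsilon$ with $\Phi_\epsilon=\epsilon\zeta_\epsilon N_l^\epsilon\partial_l u_0$ is the same device as estimating the cut-off expansion $u_\epsilon-u_0-\epsilon(1-\zeta_\epsilon)N_l^\epsilon\partial_l u_0$ in those references, so the two treatments coincide up to notation.
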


\subsection{Multiscale Finite Element Method}

Let $\mathcal{T}_h$ be a partition of $\Omega$ by triangles whose circumcircle radius $\leq h$. We need this partition be regular:
\begin{definition}[Regular triangulation family \cite{Brenner2008}]
For every element $T$ belonging to $\mathcal{T}_h$, denote $h_T$ and $r_T$ as its circumcircle and incircle radius respectively. The triangulation family is regular if there exists $\rho>0$ such that for all $T \in \mathcal{T}_h$ and for all $h \in (0, 1]$,
\[
\frac{r_T}{h_T} \geq \rho .
\]
\end{definition}

We only consider Lagrange element space $\mathcal{P}\coloneqq \{ u \in C(\overbar{\Omega}), \forall T \in \mathcal{T}_h, u|_T \in \mathcal{P}_1(T)  \}$, and take $\psi_j$ as a nodal basis function which vanishes at other nodes except at the $j$-th node. Correspondingly, the multiscale basis $\Phi_j$ can be constructed in such way:
\begin{equation}
\left\{
\begin{aligned}
&-\Div \Brackets{A(\bm{x}) \nabla \Phi_j} = 0 \text{~ in ~} T \\
&\Phi_j = \psi_j \text{~ on ~} \partial T
\end{aligned}.
\right. 
\end{equation}
This PDE determines $\Phi_j$ in each element $T$, and a simple observation tells that $\Phi_j$ is locally supported. Here we do not explicitly add $\epsilon$ on $A(\bm{x})$ because we want to emphasize MsFEM is a general method, and it is workable even \emph{without} a small periodicity assumption. Let $V_{\mathrm{ms}}$ be the finite linear space spanned by $\{\Phi_j\}$, then utilizing MsFEM to solve \cref{pro:a,pro:b,pro:c} is equivalent to solving the corresponding (hemi)variational forms on the finite dimensional function space $V_{\mathrm{ms}}$.

\section{Main Results}\label{sec:main}
\begin{lemma}
Let $A(\bm{x})$ be uniformly elliptic, and $\Omega$ be a Lipschitz domain in $\R^d$. Then $\forall u \in H^1(\Omega)$, we have:
\[
\Seminorm{u}_{1,\Omega} \leq C \Brackets{\sup_{\substack{\phi \in H^1_0(\Omega), \\ \phi \neq \bm{0}.}} \frac{\int_\Omega A \nabla u \cdot \nabla \phi}{\Seminorm{\phi}_{1,\Omega}} + \Seminorm{u}_{1/2, \partial \Omega}}.
\]
Here we redefine
\[
\Seminorm{g}_{1/2, \partial \Omega} = \inf \{ \Seminorm{v_g}_{1,\Omega} \colon v_g \in H^1(\Omega) \text{~ and ~} v_g = g \text{~ on ~} \partial \Omega\},
\]
and the constant $C$ only depends on $\kappa_1$, $\kappa_2$ of $A(\bm{x})$, and spacial dimension $d$.
\end{lemma}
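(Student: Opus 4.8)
The plan is to reduce the estimate to the standard a priori bound for the Dirichlet form by splitting $u$ into a boundary-lifting piece and an interior piece lying in $H^1_0(\Omega)$. First I set $g \coloneqq u|_{\partial\Omega}$ and, for an arbitrary $\delta>0$, choose an extension $v_g \in H^1(\Omega)$ with $v_g = g$ on $\partial\Omega$ and $\Seminorm{v_g}_{1,\Omega} \leq \Seminorm{u}_{1/2,\partial\Omega}+\delta$; such a $v_g$ exists by the very definition of the redefined fractional seminorm as an infimum over liftings. Then $w \coloneqq u - v_g$ belongs to $H^1_0(\Omega)$, and the whole difficulty is to control $\Seminorm{w}_{1,\Omega}$, after which the triangle inequality closes the argument.

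Next I would test the form $\int_\Omega A\nabla u \cdot \nabla\phi$ against the admissible choice $\phi = w$ (if $w = \bm{0}$ the claim is immediate, so assume otherwise). Writing $u = w + v_g$ and applying uniform ellipticity to the diagonal term gives
\[
\kappa_1 \Seminorm{w}_{1,\Omega}^2 \leq \int_\Omega A\nabla w \cdot \nabla w = \int_\Omega A\nabla u \cdot \nabla w - \int_\Omega A\nabla v_g \cdot \nabla w .
\]
Since $w \in H^1_0(\Omega)$, the first term on the right is bounded by $\Brackets{\sup_{\phi \in H^1_0(\Omega),\, \phi \neq \bm{0}} \int_\Omega A\nabla u \cdot \nabla\phi \,/\, \Seminorm{\phi}_{1,\Omega}}\Seminorm{w}_{1,\Omega}$, and the second by $\kappa_2 \Seminorm{v_g}_{1,\Omega}\Seminorm{w}_{1,\Omega}$ via the upper ellipticity bound together with Cauchy--Schwarz. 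Dividing through by $\Seminorm{w}_{1,\Omega}$ yields
\[
\Seminorm{w}_{1,\Omega} \leq \frac{1}{\kappa_1}\Brackets{\sup_{\phi \in H^1_0(\Omega),\, \phi \neq \bm{0}} \frac{\int_\Omega A\nabla u \cdot \nabla\phi}{\Seminorm{\phi}_{1,\Omega}}} + \frac{\kappa_2}{\kappa_1}\Seminorm{v_g}_{1,\Omega} .
\]

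Finally I would invoke the triangle inequality $\Seminorm{u}_{1,\Omega} \leq \Seminorm{w}_{1,\Omega} + \Seminorm{v_g}_{1,\Omega}$, substitute the bound on $\Seminorm{w}_{1,\Omega}$, use $\Seminorm{v_g}_{1,\Omega} \leq \Seminorm{u}_{1/2,\partial\Omega}+\delta$, and let $\delta \downarrow 0$; collecting terms produces the claim with $C = \max\{1/\kappa_1,\, 1 + \kappa_2/\kappa_1\}$. There is no genuine analytic obstacle here: the only points requiring care are that the seminorm $\Seminorm{\cdot}_{1,\Omega}$ already serves as a norm on $H^1_0(\Omega)$, so coercivity follows directly from pointwise ellipticity without invoking a Poincaré inequality, and that the infimum defining $\Seminorm{u}_{1/2,\partial\Omega}$ need not be attained, which is handled by the $\delta$-approximation above. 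The stated dependence of $C$ on the spacial dimension $d$ is harmless, since the argument in fact produces a dimension-free constant.
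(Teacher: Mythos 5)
The paper omits this proof entirely (calling it ``straight forward''), so there is no written argument to compare against; your lifting proof---choose a near-optimal extension $v_g$ of the trace, test the form with the $H^1_0(\Omega)$ remainder $w = u - v_g$, and combine lower ellipticity with the Cauchy--Schwarz bound $\left|\int_\Omega A\nabla v_g\cdot\nabla w\right| \leq \kappa_2 \Seminorm{v_g}_{1,\Omega}\Seminorm{w}_{1,\Omega}$ (valid since the paper's $A$ is symmetric)---is correct and is evidently the argument the authors had in mind. In particular it delivers exactly the property the paper's remark emphasizes: the constant $C = \max\{1/\kappa_1,\, 1+\kappa_2/\kappa_1\}$ involves neither $\Omega$ nor, as you observe, the dimension, because coercivity in the $H^1$-seminorm on $H^1_0(\Omega)$ requires no Poincar\'e inequality.
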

\begin{remark}
We omit the proof since it is straight forward, and we emphasize the constant $C$ appears here does not involve the domain $\Omega$.
\end{remark}
\begin{lemma} \label{lem:criti}
Take $T$ as a triangle with $r$ as its incircle radius. Assume that correctors $\{N_l(\bm{y})\} \subset W^{1,\infty}_\sharp(Q)$, and $A(\bm{y})$ is 1-periodic, symmetric, uniformly elliptic. Let $w_0 \in \mathcal{P}_1(T)$ and $w_\epsilon$ be the solution of the PDE:
\[
\left \{
\begin{aligned}
& -\Div(A^\epsilon \nabla w_\epsilon) = 0 \text{~ in ~} T \\
& w_\epsilon = w_0 \text{~ on ~} \partial T
\end{aligned} 
\right. .
\]
Here $A^\epsilon(\bm{x})=A(\bm{x}/\epsilon)$. If $\epsilon \ll r$, then for the error of first-order expansion $w_{\epsilon,1}=w_0+\epsilon N_l^\epsilon \partial_l w_0$, we have
\[
\Seminorm{w_\epsilon - w_{\epsilon,1}}_{1, T} \leq C \sqrt{\frac{\epsilon}{r}} \Seminorm{w_0}_{1,T} ,
\]
here $C$ only depends on $\kappa_1, \kappa_2$ of $A(\bm{y})$ and $\max_l\Norm{N_l(\bm{y})}_{W^{1,\infty}_\sharp(Q)}$.
\end{lemma}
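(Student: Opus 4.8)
The plan is to recognise that the expansion error $z_\epsilon := w_\epsilon - w_{\epsilon,1}$ is, up to its boundary values, $A^\epsilon$-harmonic in $T$, and then to bound it through a single cheap extension of those boundary values. One cannot simply invoke \cref{thm:ehalf}: there the constant's dependence on the domain is left unresolved, whereas the whole point here is to expose that dependence as $\sqrt{\epsilon/r}$. First I would compute the flux of $w_{\epsilon,1}$. Because $w_0\in\mathcal{P}_1(T)$, $\nabla w_0$ is a constant vector and $\partial_i w_{\epsilon,1}=\partial_i w_0+(\partial_i N_l)^\epsilon\partial_l w_0$, so $A^\epsilon_{ki}\partial_i w_{\epsilon,1}=B^\epsilon_{kl}\partial_l w_0$ with $B_{kl}:=A_{kl}+A_{ki}\partial_i N_l$. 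The corrector equation is precisely the statement that $\partial_k B_{kl}=0$ in the periodic, hence distributional, sense; since $\partial_l w_0$ is constant this yields $\int_T A^\epsilon\nabla w_{\epsilon,1}\cdot\nabla\phi=0$ for every $\phi\in H^1_0(T)$. Together with $\int_T A^\epsilon\nabla w_\epsilon\cdot\nabla\phi=0$ this gives $\int_T A^\epsilon\nabla z_\epsilon\cdot\nabla\phi=0$ for all $\phi\in H^1_0(T)$, i.e.\ $z_\epsilon$ is $A^\epsilon$-harmonic with Dirichlet trace $g:=-\epsilon N_l^\epsilon\partial_l w_0$ on $\partial T$.

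Next, since $z_\epsilon$ minimises the $A^\epsilon$-energy among all $H^1(T)$ functions with trace $g$, uniform ellipticity gives $\kappa_1\Seminorm{z_\epsilon}_{1,T}^2\le\int_T A^\epsilon\nabla z_\epsilon\cdot\nabla z_\epsilon\le\kappa_2\Seminorm{v}_{1,T}^2$ for any competitor $v$ with $v=g$ on $\partial T$; equivalently, the preceding a priori estimate reduces $\Seminorm{z_\epsilon}_{1,T}$ to $\Seminorm{g}_{1/2,\partial T}$ with a constant independent of $T$. The entire estimate thus reduces to producing one inexpensive extension of $g$. The obvious choice, the field $g(\bm{x})=-\epsilon N_l^\epsilon\partial_l w_0$ on all of $T$, has $O(1)$ gradient $-(\nabla N_l)^\epsilon\partial_l w_0$ and hence energy $\sim\Seminorm{w_0}_{1,T}^2$, which is useless; the $\sqrt{\epsilon}$ gain must come from localising this field to a thin layer against $\partial T$.

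The key construction is therefore a boundary-layer cutoff. I would take $v=\eta\,g$ with $\eta(\bm{x})=\max\Brackets{0,\,1-\operatorname{dist}(\bm{x},\partial T)/\delta}$, so that $\eta\equiv 1$ on $\partial T$, $\eta$ is supported in the layer $T_\delta:=\{\bm{x}\in T:\operatorname{dist}(\bm{x},\partial T)<\delta\}$, and $\Seminorm{\nabla\eta}\le 1/\delta$. Writing $M:=\max_l\Norm{N_l}_{W^{1,\infty}_\sharp(Q)}$ and using $\Seminorm{g}\le C\epsilon M\Seminorm{\nabla w_0}$ together with $\Seminorm{\nabla g}\le C M\Seminorm{\nabla w_0}$ (both because $N_l,\nabla N_l\in L^\infty$ and $\nabla w_0$ is constant) gives $\Seminorm{\nabla v}\le C M\Seminorm{\nabla w_0}\Brackets{1+\epsilon/\delta}$ on $T_\delta$ and $\nabla v=\bm 0$ elsewhere, so $\Seminorm{v}_{1,T}^2\le C M^2\Seminorm{\nabla w_0}^2\Brackets{1+\epsilon/\delta}^2\Seminorm{T_\delta}$.

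Finally I balance the width. Taking $\delta=\epsilon$ (admissible since $\epsilon\ll r$ keeps $T_\delta\subsetneq T$) makes $\Brackets{1+\epsilon/\delta}^2$ a pure constant, and the layer measure is controlled by the inradius: eroding $T$ inward by $\delta$ produces a similar triangle of inradius $r-\delta$, whence $\Seminorm{T_\delta}=\Seminorm{T}\Brackets{1-(1-\delta/r)^2}\le 2\Seminorm{T}\delta/r$ --- this is the classical identity $\Seminorm{T}=r\cdot(\text{semiperimeter})$ rewriting perimeter$\,\times\,\delta$ as $2\Seminorm{T}\delta/r$, and it is exactly the step that produces the factor $1/r$. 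Substituting $\delta=\epsilon$ and $\Seminorm{\nabla w_0}^2\Seminorm{T}=\Seminorm{w_0}_{1,T}^2$ gives $\Seminorm{v}_{1,T}^2\le C M^2\frac{\epsilon}{r}\Seminorm{w_0}_{1,T}^2$, and the energy bound then yields $\Seminorm{z_\epsilon}_{1,T}\le C\sqrt{\epsilon/r}\,\Seminorm{w_0}_{1,T}$ with $C=C(\kappa_1,\kappa_2,M)$. The hard part is securing this shape-independent $1/r$: an arbitrary triangle cannot be rescaled to a reference one without a shape-dependent constant, so the argument must avoid rescaling altogether, and the boundary-layer construction does so because the only geometric fact it needs is the exact relation area $=$ inradius $\times$ semiperimeter. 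A secondary point requiring care is the exact cancellation giving $A^\epsilon$-harmonicity of $z_\epsilon$, which hinges on $w_0$ being affine; non-affine data would leave a second-derivative residual, as in \cref{thm:ehalf}.
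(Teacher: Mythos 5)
Your proposal is correct and follows essentially the same route as the paper's own proof: show the expansion error is $A^\epsilon$-harmonic in $T$, reduce the estimate to extending the boundary data $-\epsilon N_l^\epsilon \partial_l w_0$ inward with a domain-independent constant, and build that extension with the distance-function cutoff of width $\epsilon$ (your $\eta$ with $\delta=\epsilon$ is exactly the paper's $\theta_\epsilon$), finishing with the same triangle-erosion identity $\Seminorm{T_\epsilon}/\Seminorm{T}=1-(1-\epsilon/r)^2$. The only (harmless) deviations are that you get harmonicity directly from the divergence-free flux $B_{kl}=A_{kl}+A_{ki}\partial_i N_l$ furnished by the corrector equation, where the paper invokes the skew-symmetric flux corrector $E_{ijl}$, and you replace the paper's first lemma by the Dirichlet principle; both substitutions are valid and, if anything, slightly more elementary.
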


\begin{proof}
Let $\phi \in C^\infty_0(T)$, and $r_\epsilon = w_\epsilon - w_{\epsilon,1}$, we will have:
\[
\int_T A^\epsilon \nabla r_\epsilon \cdot \nabla \phi = \int_T A^\epsilon \nabla w_\epsilon \cdot \nabla \phi - \int_T \SquareBrackets{A^\epsilon_{il}+\Brackets{A_{ij}\partial_j N_l}^\epsilon}\partial_l w_0 \partial_i \phi
\]
It is obvious that $w_0$ is the homogenized solution of $w_\epsilon$, and this fact immediately leads $\int_T A^\epsilon \nabla w_\epsilon \cdot \nabla \phi = \int_T \hat{A} \nabla w_0 \cdot \nabla \phi$.  We can construct $\{E_{ijl}(\bm{y})\} \subset H^1_\sharp(Q)$, such that $\partial_j E_{ijl}(\bm{y})=\Brackets{\hat{A}_{il}-A_{ij}-A_{ij}\partial_j N_l}(\bm{y})$, and $E_{ijl}=-E_{jil}$. To see this, a classic proof by Fourier series analysis is provided in \cite{Jikov1994} page 27. There is a new proof (ref. \cite{Kenig2013a}) which we think is more ingenious and clear. We briefly introduce it here: if functions $\{b_{il}\} \subset L^2_\sharp(Q)$ satisfy $\partial_i b_{il} = 0$ and $\int_Q b_{il}(\bm{y}) \dx \bm{y}=0$; then define $G_{ijl}=\partial_i c_{jl}-\partial_j c_{il}$, where $-\Delta c_{il}=b_{il}$; one can verify $G_{ijl} \in H^1_\sharp(Q)$ with $\Norm{G_{ijl}}_{1,Q} \leq C\max_{i,l}\Norm{b_{il}}_{0,Q}$, $G_{ijl}=-G_{jil}$ and $b_{il}=\partial_j G_{ijl}$. Then
\[
\begin{aligned}
&\int_T A^\epsilon \nabla w_\epsilon \cdot \nabla \phi - \int_T \SquareBrackets{A^\epsilon_{il}+\Brackets{A_{ij}\partial_j N_l}^\epsilon}\partial_l w_0 \partial_i \phi = \int_T \SquareBrackets{\hat{A}-A^\epsilon_{il}-\Brackets{A_{ij}\partial_j N_l}^\epsilon}\partial_l w_0 \partial_i \phi \\
=&\int_T (\partial_j E_{ijl})^\epsilon \partial_l w_0 \partial_i \phi = \epsilon \int_T \partial_j (E_{ijl}^\epsilon) \partial_l w_0 \partial_i \phi \\
=& -\epsilon \int_T E_{ijl}^\epsilon \partial_l w_0 \partial_{ij} \phi = 0 .
\end{aligned}
\]
An integration by parts rule is used in the last line. We obtain $\int_T A^\epsilon \nabla r_\epsilon \cdot \nabla \phi = 0$, and we are left to estimate $\Seminorm{r_\epsilon}_{1/2, \partial T} = \Seminorm{\epsilon N_l^\epsilon \partial_l w_0}_{1/2, \partial T}$. Take $\mathrm{d}_T(\bm{x}) \coloneqq \mathrm{dist}\Brackets{\bm{x}, \partial T}$, and it is easy to show $\mathrm{d}_T(\bm{x})$ has a bounded gradient $\Seminorm{\nabla \mathrm{d}_T} \leq 1$. Here the triangle domain is simple enough to allow us to calculate $\mathrm{d}_T$, and refer \cite{LawrenceCraig2015} for more general discussions. Denote $T_\epsilon = \{ \bm{x} \in T \colon \mathrm{d}_T(\bm{x}) < \epsilon \}$, and $\theta_\epsilon \coloneqq (\epsilon - \mathrm{d}_T(\bm{x}))^+ / \epsilon$. We will see $0 \leq \theta \leq 1$ and $\epsilon N_l^\epsilon \partial_l w_0 \theta_\epsilon = \epsilon N_l^\epsilon \partial_l w_0 = -r_\epsilon$ on $\partial T$. Then we need to derive an estimation for the derivatives of $\epsilon N_l \partial_l w_0 \theta_\epsilon$:
\[
\begin{aligned}
\Norm{\partial_i \Brackets{\epsilon N_l^\epsilon \partial_l w_0 \theta_\epsilon}}_{0, T}^2 \leq & C \Brackets{\Norm{(\partial_i N_l)^\epsilon \partial_l w_0 \theta_\epsilon }_{0, T}^2 + \Norm{N_l^\epsilon \partial_l w_0 \partial_i (\epsilon \theta_\epsilon)}_{0,T}^2} \\
\leq &C \max_l \Brackets{\Norm{N_l}_{W^{1,\infty}_\sharp(Q)}^2 \Norm{\partial_l w_0}_{L^\infty(T)}^2} \Seminorm{T_\epsilon} \\
\leq & C \max_l \Norm{N_l}_{W^{1,\infty}_\sharp(Q)}^2 \Seminorm{w_0}_{1, T}^2 \frac{\Seminorm{T_\epsilon}}{\Seminorm{T}}
\end{aligned}.
\]
Here an important fact that $\partial_l w_0$ is constant in $T$ is considered, and the constant $C$ is independent of $T$. The precise value of $\Seminorm{T_\epsilon}/\Seminorm{T}$ is $1-(1-\epsilon/r)^2$, and we have $\Seminorm{T_\epsilon}/\Seminorm{T} \leq C \epsilon /r$ if $\epsilon$ is small enough comparing to $r$. 
\end{proof}

\begin{remark}
By a scaling argument, above lemma is a direct consequence of $\epsilon^{1/2}$ estimation (refer \cite{Shen2016,Jikov1994}). However, we must derive an estimation such that constants $C$ within inequalities are independent of the domain. Because the elements generated by meshing process are not always identical, typically in unstructured grids. This lemma can be naturally extended to a 3D domain. 
\end{remark}
We need an interpolation operator satisfied following properties, and one specific example is the local regularization operator in \cite{Bernardi1998}.
\begin{proposition} \label{prop:inter}
If the triangulation $\mathcal{T}_h$ is regular, then there exists an interpolation operator $\mathcal{I}:H^1(\Omega) \mapsto \mathcal{P}$ and a constant $C_{\mathcal{I}}$ such that $\forall u \in H^2(\Omega)$,
\[
\begin{aligned}
\Seminorm{u- \mathcal{I}u}_{1,\Omega} &\leq C_{\mathcal{I}} h \Seminorm{u_0}_{2,\Omega};\\
\Seminorm{\mathcal{I}u}_{1,\Omega} &\leq C_{\mathcal{I}} \Norm{u}_{2,\Omega};\\
\Norm{u - \mathcal{I} u}_{0, \partial \Omega} & \leq C_{\mathcal{I}} h^{3/2} \Norm{u}_{2,\Omega}.
\\
\end{aligned}
\]
Here the constant $C_\mathcal{I}$ depends on $\Omega$ and $\rho$ of $\mathcal{T}_h$.
\end{proposition}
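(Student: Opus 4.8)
The plan is to take $\mathcal{I}$ to be precisely the Cl\'ement-type local regularization operator of \cite{Bernardi1998} and to verify the three estimates by reducing everything to a reference element through scaling. Such an operator is assembled as $\mathcal{I}u = \sum_j (\pi_j u)\psi_j$, where $\psi_j$ is the nodal basis at the $j$-th node and $\pi_j u$ is a local average (an $L^2$-projection onto $\mathcal{P}_1$ over the patch $\omega_j$ of elements touching that node), so that $\mathcal{I}$ maps $H^1(\Omega)$ into the continuous piecewise-linear space $\mathcal{P}$. Two structural facts drive the whole argument: first, $\mathcal{I}$ reproduces $\mathcal{P}_1$ locally, hence $\mathcal{I}p=p$ for any globally affine $p$; second, $\mathcal{I}u|_T$ depends only on the values of $u$ on the patch $\omega_T \coloneqq \bigcup\{\omega_j : z_j \in \overbar{T}\}$. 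Under the regularity hypothesis $r_T/h_T \geq \rho$, the number of elements meeting any node, hence the finite-overlap number of the patches, is bounded solely in terms of $\rho$.

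For the first two (bulk) estimates I would argue element by element. Fix $T$ and pull back to the reference triangle $\hat{T}$ by the affine map $F_T$; shape-regularity bounds $\Norm{F_T'}$ and $\Norm{(F_T')^{-1}}$ by $C h_T$ and $C h_T^{-1}$ with $C=C(\rho)$. On $\hat{T}$ the operator is $H^1$-bounded and affine-reproducing, so the Bramble--Hilbert lemma yields $\Seminorm{\hat u - \hat{\mathcal{I}}\hat u}_{1,\hat T} \leq C \Seminorm{\hat u}_{2,\hat\omega_T}$; scaling back gives $\Seminorm{u-\mathcal{I}u}_{1,T} \leq C h_T \Seminorm{u}_{2,\omega_T}$. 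Squaring, summing over $T$, and using finite overlap to collapse $\sum_T \Seminorm{u}_{2,\omega_T}^2 \leq C\Seminorm{u}_{2,\Omega}^2$ produces the first inequality with $h=\max_T h_T$. The second then follows at once from the triangle inequality, $\Seminorm{\mathcal{I}u}_{1,\Omega} \leq \Seminorm{u-\mathcal{I}u}_{1,\Omega} + \Seminorm{u}_{1,\Omega} \leq C(h+1)\Norm{u}_{2,\Omega}$, since $h\leq 1$.

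The boundary estimate is where the work concentrates, and I expect it to be the main obstacle, because it requires the $L^2$ (not just $H^1$) approximation rate together with a correctly scaled trace inequality. Repeating the reference-element/Bramble--Hilbert argument at the $L^2$ level gives the sharper bound $\Norm{u-\mathcal{I}u}_{0,T} \leq C h_T^2 \Seminorm{u}_{2,\omega_T}$. For each boundary edge $e \subset \partial T \cap \partial\Omega$ I would invoke the scaled trace inequality $\Norm{v}_{0,e}^2 \leq C\Brackets{h_T^{-1}\Norm{v}_{0,T}^2 + h_T \Seminorm{v}_{1,T}^2}$ (proved on $\hat T$ and scaled back), applied to $v=u-\mathcal{I}u$. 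Inserting the two bulk bounds gives $\Norm{u-\mathcal{I}u}_{0,e}^2 \leq C\Brackets{h_T^{-1}h_T^4 + h_T\, h_T^2}\Seminorm{u}_{2,\omega_T}^2 = C h_T^3 \Seminorm{u}_{2,\omega_T}^2$, and summing over boundary edges with finite overlap yields $\Norm{u-\mathcal{I}u}_{0,\partial\Omega} \leq C h^{3/2}\Norm{u}_{2,\Omega}$.

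The delicate points to watch are: (i) keeping every constant dependent only on $\rho$ (through shape-regularity and overlap) and on $\Omega$ (through the global patch structure near the Lipschitz boundary), never on individual elements; (ii) the bookkeeping of powers of $h_T$ in the trace step, where the $h_T^{-1}$ coming from the trace inequality must be beaten by the extra power supplied by the $L^2$ rate; and (iii) confirming that $\hat{\mathcal{I}}$ really is bounded $H^1(\hat\omega) \to H^1(\hat T)$ and affine-reproducing, which is exactly the property guaranteed by the construction in \cite{Bernardi1998} and is what lets the Bramble--Hilbert lemma apply uniformly across the family.
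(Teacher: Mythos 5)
Your proposal is correct and is exactly the route the paper has in mind: the paper states this proposition \emph{without proof}, merely citing the local regularization operator of \cite{Bernardi1998} as an operator having these properties, and your verification (local $\mathcal{P}_1$-reproduction plus Bramble--Hilbert on patches for the two bulk estimates, then the scaled trace inequality $\Norm{v}_{0,e}^2 \leq C\Brackets{h_T^{-1}\Norm{v}_{0,T}^2 + h_T \Seminorm{v}_{1,T}^2}$ combined with the $O(h_T^2)$ local $L^2$ rate for the $h^{3/2}$ boundary estimate) is the standard argument, with correct bookkeeping of the powers of $h_T$. The only point to handle carefully, which you already flag yourself, is that the pulled-back patches $\hat\omega_T$ are not a single fixed reference configuration, so the uniformity of the Bramble--Hilbert constant over all patches must be drawn from shape regularity as in the cited construction rather than from a naive compactness claim.
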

Take $u_{0,\mathcal{I}} = \mathcal{I} u_0$, and $u_{\epsilon, \mathcal{I}} \in V_{\mathrm{ms}}$ with $u_{\epsilon, \mathcal{I}} = u_{0,\mathcal{I}}$ on each $\partial T$. Then we have a lemma:
\begin{lemma} \label{lem:uei}
Assume $u_0 \in H^2(\Omega)$ and $N_l(\bm{y}) \in W^{1,\infty}_\sharp(Q)$. For a regular triangulation $\mathcal{T}_h$ and an interpolation operator $\mathcal{I}$ that satisfies \cref{prop:inter}, we have:
\[
\Seminorm{u_{\epsilon, 1} - u_{\epsilon, \mathcal{I}}}_{1,\Omega} \leq C \Braces{\epsilon+h+\sqrt{\frac{\epsilon}{h}}}\Norm{u_0}_{2,\Omega},
\]
and
\[
\Norm{u_{\epsilon,1}-u_{\epsilon, \mathcal{I}}}_{0,\partial \Omega} \leq C\Braces{\epsilon+h^{3/2}} \Norm{u_0}_{2,\Omega}.
\]
Here the constant $C$ is independent of $\epsilon$ and $h$.
\end{lemma}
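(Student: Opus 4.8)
The plan is to interpose the element-wise first-order expansion of the interpolant, $u_{\epsilon,\mathcal{I},1} \coloneqq u_{0,\mathcal{I}} + \epsilon N_l^\epsilon \partial_l u_{0,\mathcal{I}}$, defined separately on each $T$ (note $\partial_l u_{0,\mathcal{I}}$ is constant on every $T$ since $u_{0,\mathcal{I}}\in\mathcal{P}$). Because this interpolant expansion jumps across interior edges, I work throughout with the broken seminorm $\Seminorm{v}_{1,h}^2 = \sum_{T\in\mathcal{T}_h}\Seminorm{v}_{1,T}^2$, which coincides with $\Seminorm{\cdot}_{1,\Omega}$ on the genuinely $H^1(\Omega)$ quantity $u_{\epsilon,1}-u_{\epsilon,\mathcal{I}}$. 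The triangle inequality then gives
\[
\Seminorm{u_{\epsilon,1} - u_{\epsilon,\mathcal{I}}}_{1,\Omega} \leq \Seminorm{u_{\epsilon,1} - u_{\epsilon,\mathcal{I},1}}_{1,h} + \Seminorm{u_{\epsilon,\mathcal{I},1} - u_{\epsilon,\mathcal{I}}}_{1,h}.
\]
The crux is the observation that on each $T$ the restriction $u_{\epsilon,\mathcal{I}}|_T$ is $A^\epsilon$-harmonic with boundary datum $u_{0,\mathcal{I}}|_T \in \mathcal{P}_1(T)$; this is exactly the setup of \cref{lem:criti} with $w_0 = u_{0,\mathcal{I}}|_T$, $w_\epsilon = u_{\epsilon,\mathcal{I}}|_T$, and associated expansion $w_{\epsilon,1} = u_{\epsilon,\mathcal{I},1}|_T$.

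For the second term I apply \cref{lem:criti} on each element and sum the squares, obtaining $\Seminorm{u_{\epsilon,\mathcal{I},1} - u_{\epsilon,\mathcal{I}}}_{1,h}^2 \leq C\sum_T \frac{\epsilon}{r_T}\Seminorm{u_{0,\mathcal{I}}}_{1,T}^2$. Invoking the regularity (and quasi-uniformity) of $\mathcal{T}_h$ to bound $r_T^{-1}\leq C h^{-1}$ uniformly, this collapses to $C\frac{\epsilon}{h}\Seminorm{u_{0,\mathcal{I}}}_{1,\Omega}^2$, and the second estimate of \cref{prop:inter} controls $\Seminorm{\mathcal{I}u_0}_{1,\Omega}\leq C_{\mathcal{I}}\Norm{u_0}_{2,\Omega}$, yielding the $\sqrt{\epsilon/h}$ contribution. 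For the first term I write $u_{\epsilon,1} - u_{\epsilon,\mathcal{I},1} = e + \epsilon N_l^\epsilon \partial_l e$ with $e \coloneqq u_0 - u_{0,\mathcal{I}}$ and differentiate elementwise, producing the three pieces $\partial_i e$, $(\partial_i N_l)^\epsilon\partial_l e$, and $\epsilon N_l^\epsilon \partial_{il} e$. Since $u_{0,\mathcal{I}}$ is affine on each $T$ we have $\partial_{il}e = \partial_{il}u_0$ there, so the $W^{1,\infty}_\sharp(Q)$ bounds on $N_l$ and its gradient, together with the first estimate of \cref{prop:inter} ($\Seminorm{e}_{1,h}\leq C_{\mathcal{I}}h\Seminorm{u_0}_{2,\Omega}$), bound the first two pieces by $Ch\Norm{u_0}_{2,\Omega}$ and the last by $C\epsilon\Norm{u_0}_{2,\Omega}$. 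Combining gives the claimed $C\Braces{\epsilon+h+\sqrt{\epsilon/h}}\Norm{u_0}_{2,\Omega}$.

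The boundary estimate is shorter. On $\partial\Omega$, which is a union of element edges, $u_{\epsilon,\mathcal{I}}$ agrees with its boundary datum $u_{0,\mathcal{I}} = \mathcal{I}u_0$, so $u_{\epsilon,1}-u_{\epsilon,\mathcal{I}} = (u_0 - \mathcal{I}u_0) + \epsilon N_l^\epsilon \partial_l u_0$ there. I then split $\Norm{u_{\epsilon,1}-u_{\epsilon,\mathcal{I}}}_{0,\partial\Omega}$ into $\Norm{u_0-\mathcal{I}u_0}_{0,\partial\Omega}$, handled by the third estimate of \cref{prop:inter} to give $C_{\mathcal{I}}h^{3/2}\Norm{u_0}_{2,\Omega}$, plus $\epsilon\Norm{N_l^\epsilon\partial_l u_0}_{0,\partial\Omega}$, which I control by $\epsilon\max_l\Norm{N_l}_{W^{1,\infty}_\sharp(Q)}\Norm{\nabla u_0}_{0,\partial\Omega}$ and the trace theorem $\Norm{\nabla u_0}_{0,\partial\Omega}\leq C\Norm{u_0}_{2,\Omega}$, producing $C\epsilon\Norm{u_0}_{2,\Omega}$.

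I expect the main obstacle to be the second (seminorm) term: one must recognize the exact reduction to \cref{lem:criti} on each element, then convert the resulting local factors $\sqrt{\epsilon/r_T}$ into the single global factor $\sqrt{\epsilon/h}$. The latter genuinely needs the smallest incircle radius to be comparable to $h$, so the mesh-geometry bookkeeping — justifying $r_T^{-1}\leq Ch^{-1}$ from the regularity of $\mathcal{T}_h$ with a constant independent of $\epsilon$ and $h$ — is the delicate step, whereas the remaining contributions are routine applications of \cref{prop:inter}, the corrector bounds, and the trace theorem.
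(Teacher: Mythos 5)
Your proposal is correct and follows essentially the same route as the paper: both insert the elementwise expansion $u_{0,\mathcal{I}} + \epsilon N_l^\epsilon \partial_l u_{0,\mathcal{I}}$, reduce the interior bound to \cref{lem:criti} applied element by element (using that $u_{\epsilon,\mathcal{I}}|_T$ is the $A^\epsilon$-harmonic extension of the affine datum $u_{0,\mathcal{I}}|_T$) combined with the interpolation estimates of \cref{prop:inter}, and treat the boundary term by exactly the same splitting ($u_{\epsilon,\mathcal{I}}=u_{0,\mathcal{I}}$ on $\partial\Omega$, then interpolation trace estimate plus the trace theorem for $\epsilon N_l^\epsilon\partial_l u_0$). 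Your explicit observation that converting the local factors $\epsilon/r_T$ into a global $\epsilon/h$ requires quasi-uniformity, not merely shape-regularity, is in fact slightly more careful than the paper, which attributes that step to regularity alone.
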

\begin{proof}
By a direct computation,
\[
\partial_i u_{\epsilon, 1} = \partial_i u_0 + \Brackets{\partial_i N_l}^\epsilon \partial_l u_0 + \epsilon N_l^\epsilon \partial_{il} u_0,
\]
and
\[
\Seminorm{u_{\epsilon, 1} - u_{\epsilon, \mathcal{I}}}_{1,\Omega}^2 \leq C \sum_i \Norm{\partial_i u_0 + \Brackets{\partial_i N_l}^\epsilon \partial_l u_0 - \partial_i u_{\epsilon, \mathcal{I}}}_{0, \Omega}^2 + \epsilon^2 \Norm{N_l}_{W^{1,\infty}_\sharp(Q)}^2 \Seminorm{u_0}_{2, \Omega}^2 .
\]
Then we focus on the term $\Norm{\partial_i u_0 + \Brackets{\partial_i N_l}^\epsilon \partial_l u_0 - \partial_i u_{\epsilon, \mathcal{I}}}_{0, \Omega}$. Insert $\partial_i u_{0, \mathcal{I}}$ and recall the assumption for interpolation $\mathcal{I}$, we have
\[
\begin{aligned}
&\Norm{\partial_i u_0 + \Brackets{\partial_i N_l}^\epsilon \partial_l u_0 - \partial_i u_{\epsilon, \mathcal{I}}}_{0, \Omega}^2 \\
\leq & C\Norm{\partial_i u_0- \partial_i u_{0, \mathcal{I}} + \Brackets{\partial_i N_l}^\epsilon \Brackets{\partial_l u_0-\partial_l u_{0, \mathcal{I}}}}_{0, \Omega}^2 + \int_\Omega \Seminorm{\partial_i u_{0, \mathcal{I}} +\Brackets{\partial_i N_l}^\epsilon\partial_l u_{0, \mathcal{I}} - \partial_i u_{\epsilon, \mathcal{I}}}^2 \\ 
\leq & C h^2 \Norm{N_l}_{W^{1,\infty}_\sharp(Q)}^2 \Norm{u_0}_{2, \Omega}^2 + C \sum_{T\in \mathcal{T}_h} \Seminorm{u_{\epsilon,\mathcal{I}}-u_{0,\mathcal{I}}-\epsilon N_l^\epsilon \partial_l u_{0, \mathcal{I}}}_{1, T}^2
\end{aligned}
\]
Note $u_{\epsilon, \mathcal{I}} = u_{0,\mathcal{I}}$ on every element's boundary, then combine \cref{lem:criti} and the regularity of $\mathcal{T}_h$,
\[
\sum_{T\in \mathcal{T}_h} \Seminorm{u_{\epsilon,\mathcal{I}}-u_{0,\mathcal{I}}-\epsilon N_l^\epsilon \partial_l u_{0, \mathcal{I}}}_{1, T}^2 \leq C \frac{\epsilon}{h} \Seminorm{u_{0,\mathcal{I}}}^2_{1,\Omega} \leq C \frac{\epsilon}{h} \Norm{u_0}_{2, \Omega}^2.
\]
The first inequality is established by summing together all these parts. For the second inequality, by definition of $u_{\epsilon, \mathcal{I}}$ we have $u_{\epsilon, \mathcal{I}} = u_{0, \mathcal{I}}$ on $\partial \Omega$, then
\[
\begin{aligned}
\Norm{u_{\epsilon,1}-u_{\epsilon, \mathcal{I}}}_{0,\partial \Omega}=&\Norm{u_{\epsilon,1}-u_{0, \mathcal{I}}}_{0,\partial \Omega} \\
\leq&\Norm{u_0-u_{0, \mathcal{I}}}_{0,\partial \Omega}+\epsilon \Norm{N_l^\epsilon \partial_l u_0}_{0,\partial \Omega} \\
\leq&C\Brackets{h^{3/2}+\epsilon} \Norm{u_0}_{2,\Omega}
\end{aligned}.
\]
Here a trace inequality is used in the last line.
\end{proof}
We take $u_{\epsilon, \mathrm{ms}}$ to represent the MsFEM solutions of \cref{pro:a,pro:b,pro:c}. For \cref{pro:a} and \cref{pro:b}, C\'{e}a's inequality can be directly utilized.
\begin{theorem} \label{thm:ab}
Under the same assumptions in \cref{lem:uei}, for \cref{pro:a} we have:
\[
\Seminorm{u_\epsilon-u_{\epsilon, \mathrm{ms}}}_{1,\Omega} \leq C\Braces{\epsilon^{1/2}+h+\sqrt{\frac{\epsilon}{h}}} \Norm{u_0}_{2,\Omega}.
\]
For \cref{pro:b},
\[
\vertiii{u_\epsilon- u_{\epsilon, \mathrm{ms}}} \leq C\Braces{\epsilon^{1/2}+h+\sqrt{\frac{\epsilon}{h}}} \Norm{u_0}_{2,\Omega}.
\]
\end{theorem}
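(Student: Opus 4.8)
The plan is to exploit the symmetry and coercivity of both weak forms so that C\'{e}a's inequality reduces the Galerkin error to a best-approximation error, and then to use the single conforming competitor $u_{\epsilon,\mathcal{I}} \in V_{\mathrm{ms}}$ whose error was already dissected in \cref{lem:uei}. Concretely, for either problem I would insert $u_{\epsilon,1}$ and split by the triangle inequality,
\[
\Seminorm{u_\epsilon - u_{\epsilon,\mathcal{I}}}_{1,\Omega} \leq \Seminorm{u_\epsilon - u_{\epsilon,1}}_{1,\Omega} + \Seminorm{u_{\epsilon,1} - u_{\epsilon,\mathcal{I}}}_{1,\Omega},
\]
handing the first piece to \cref{thm:ehalf} (bound $C\epsilon^{1/2}\Norm{u_0}_{2,\Omega}$) and the second to \cref{lem:uei} (bound $C\Braces{\epsilon + h + \sqrt{\epsilon/h}}\Norm{u_0}_{2,\Omega}$). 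Since $\epsilon \leq \epsilon^{1/2}$ for $\epsilon \leq 1$, the stray $\epsilon$ is absorbed and the advertised rate $\epsilon^{1/2} + h + \sqrt{\epsilon/h}$ results.

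For \cref{pro:a} this is immediate: the bilinear form $\int_\Omega A^\epsilon \nabla u \cdot \nabla v$ is coercive with constant $\kappa_1$ and continuous with constant $\kappa_2$ in the $H^1$-seminorm on $H^1_{0,\Gamma_D}(\Omega)$ (Poincar\'{e} makes the seminorm a norm there), so C\'{e}a gives $\Seminorm{u_\epsilon - u_{\epsilon,\mathrm{ms}}}_{1,\Omega} \leq (\kappa_2/\kappa_1)\Seminorm{u_\epsilon - u_{\epsilon,\mathcal{I}}}_{1,\Omega}$. Here I only need to check that $u_{\epsilon,\mathcal{I}}$ is a legitimate competitor, i.e. that it vanishes on $\Gamma_D$; this holds because the regularization operator of \cref{prop:inter} can be taken to preserve the homogeneous condition of $u_0$ on $\Gamma_D$, whence the nodal values of $u_{0,\mathcal{I}}$ there vanish and the multiscale function $u_{\epsilon,\mathcal{I}}$, which agrees with the linear $u_{0,\mathcal{I}}$ on each $\partial T$, inherits this.

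For \cref{pro:b} the relevant norm is the energy norm $\vertiii{\cdot}$, and since the Robin bilinear form is exactly the inner product inducing $\vertiii{\cdot}$, C\'{e}a's inequality holds with constant one, i.e. $u_{\epsilon,\mathrm{ms}}$ is the $\vertiii{\cdot}$-best approximation and $\vertiii{u_\epsilon - u_{\epsilon,\mathrm{ms}}} \leq \vertiii{u_\epsilon - u_{\epsilon,\mathcal{I}}}$. Splitting as above, the homogenization piece $\vertiii{u_\epsilon - u_{\epsilon,1}}$ is controlled by the Robin case of \cref{thm:ehalf}. The piece $\vertiii{u_{\epsilon,1} - u_{\epsilon,\mathcal{I}}}$ is where the boundary contribution of the energy norm surfaces: expanding
\[
\vertiii{u_{\epsilon,1} - u_{\epsilon,\mathcal{I}}}^2 \leq \kappa_2 \Seminorm{u_{\epsilon,1} - u_{\epsilon,\mathcal{I}}}_{1,\Omega}^2 + \alpha_2 \Norm{u_{\epsilon,1} - u_{\epsilon,\mathcal{I}}}_{0,\partial\Omega}^2,
\]
I would feed the bulk term the first estimate of \cref{lem:uei} and the boundary term the second estimate there, so that both are of order $\epsilon + h + \sqrt{\epsilon/h}$ (using $h^{3/2} \leq h$).

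I do not expect a genuine obstacle: the substantive work has already been front-loaded into \cref{thm:ehalf} and \cref{lem:uei}, and the theorem is essentially their bookkeeping through C\'{e}a. The one point deserving care{\textemdash}and the structural reason \cref{lem:uei} was stated with two conclusions{\textemdash}is the Robin energy norm's trace term, which forces the separate $L^2(\partial\Omega)$ bound; without it the $\vertiii{\cdot}$ estimate could not close. A minor secondary check is the conformity of $u_{\epsilon,\mathcal{I}}$ for \cref{pro:a} noted above.
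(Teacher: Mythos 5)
Your proposal is correct and follows essentially the same route as the paper: C\'{e}a's inequality to reduce to the competitor $u_{\epsilon,\mathcal{I}}$, the triangle-inequality split through $u_{\epsilon,1}$, then \cref{thm:ehalf} and \cref{lem:uei} with high-order terms absorbed. The paper dispatches the Robin case with a bare ``similarly,'' so your explicit expansion of the energy norm into bulk and boundary pieces (consuming both estimates of \cref{lem:uei}) and your conformity check of $u_{\epsilon,\mathcal{I}}$ on $\Gamma_D$ simply fill in details the paper leaves implicit.
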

\begin{proof}
By C\'{e}a's inequality, for \cref{pro:a}
\[
\Seminorm{u_\epsilon-u_{\epsilon, \mathrm{ms}}}_{1,\Omega} \leq C\Seminorm{u_\epsilon-u_{\epsilon, \mathcal{I}}}_{1,\Omega} \leq C \Brackets{\Seminorm{u_\epsilon-u_{\epsilon, 1}}_{1,\Omega}+\Seminorm{u_{\epsilon, 1}-u_{\epsilon, \mathcal{I}}}_{1,\Omega}} ,
\]
and the result follows from \cref{thm:ehalf} and \cref{lem:uei} by discarding high order terms, similarly for \cref{pro:b}.
\end{proof}

C\'{e}a's inequality for hemivariational problem is slightly different (cf. \cite{Han2017}), and from the reference we can obtain
\[
\Seminorm{u_\epsilon-u_{\epsilon, \mathrm{ms}}}_{1,\Omega}^2 \leq C \Brackets{\Seminorm{u_\epsilon-u_{\epsilon, \mathcal{I}}}_{1,\Omega}^2 + \Norm{u_\epsilon-u_{\epsilon, \mathcal{I}}}_{0,\Gamma_C}}.
\]
Then we have a theorem:
\begin{theorem}\label{thm:c}
Under the same assumptions in \cref{lem:uei}, for \cref{pro:c},
\[
\Seminorm{u_\epsilon-u_{\epsilon, \mathrm{ms}}}_{1,\Omega} \leq C\Braces{\Brackets{\epsilon^{1/2}+h+\sqrt{\frac{\epsilon}{h}}} \Norm{u_0}_{2,\Omega}+\sqrt{R_\epsilon}+h^{3/4} \sqrt{\Norm{u_0}_{2,\Omega}}}
\]
Here $R_\epsilon = \Norm{u_\epsilon-u_0}_{0,\Gamma_C}$.
\end{theorem}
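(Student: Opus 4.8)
The plan is to feed the modified C\'{e}a inequality for \cref{pro:c} (the display immediately preceding the statement) with the two building blocks already proved: the first-order expansion estimate of \cref{thm:ehalf} and the interpolation estimates of \cref{lem:uei}. The interior seminorm term is handled exactly as in \cref{thm:ab}, so the genuinely new work is the contact boundary term $\Norm{u_\epsilon-u_{\epsilon,\mathcal{I}}}_{0,\Gamma_C}$, which has no counterpart in the Dirichlet--Neumann or Robin cases.

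First I would dispatch the interior term. Inserting $u_{\epsilon,1}$ and applying the triangle inequality gives
\[
\Seminorm{u_\epsilon-u_{\epsilon,\mathcal{I}}}_{1,\Omega}\le \Seminorm{u_\epsilon-u_{\epsilon,1}}_{1,\Omega}+\Seminorm{u_{\epsilon,1}-u_{\epsilon,\mathcal{I}}}_{1,\Omega},
\]
and I would bound the first summand by the hemivariational case of \cref{thm:ehalf} (yielding $C\epsilon^{1/2}\Norm{u_0}_{2,\Omega}$) and the second by \cref{lem:uei}. Absorbing the higher-order $\epsilon$ into $\epsilon^{1/2}$ produces $\Seminorm{u_\epsilon-u_{\epsilon,\mathcal{I}}}_{1,\Omega}\le C\Brackets{\epsilon^{1/2}+h+\sqrt{\epsilon/h}}\Norm{u_0}_{2,\Omega}$, which is precisely the quantity that will become the first group of terms in the conclusion after it is squared.

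The boundary term is where $u_0$, and hence $R_\epsilon$, must enter. Since $\Gamma_C\subset\partial\Omega$, I would insert both $u_0$ and $u_{\epsilon,1}$ and estimate over all of $\partial\Omega$:
\[
\Norm{u_\epsilon-u_{\epsilon,\mathcal{I}}}_{0,\Gamma_C}\le \Norm{u_\epsilon-u_0}_{0,\Gamma_C}+\Norm{u_0-u_{\epsilon,1}}_{0,\partial\Omega}+\Norm{u_{\epsilon,1}-u_{\epsilon,\mathcal{I}}}_{0,\partial\Omega}.
\]
The first term is $R_\epsilon$ by definition. For the middle term, $u_0-u_{\epsilon,1}=-\epsilon N_l^\epsilon\partial_l u_0$, so $\Norm{N_l}_{W^{1,\infty}_\sharp(Q)}$ together with the trace inequality applied to $\partial_l u_0\in H^1(\Omega)$ yields a bound $C\epsilon\Norm{u_0}_{2,\Omega}$. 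The last term is controlled directly by the boundary estimate in \cref{lem:uei}, namely $C\Brackets{\epsilon+h^{3/2}}\Norm{u_0}_{2,\Omega}$. Altogether $\Norm{u_\epsilon-u_{\epsilon,\mathcal{I}}}_{0,\Gamma_C}\le R_\epsilon+C\Brackets{\epsilon+h^{3/2}}\Norm{u_0}_{2,\Omega}$.

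Finally I would substitute both bounds into the squared C\'{e}a inequality and take square roots, using subadditivity $\sqrt{a+b}\le\sqrt{a}+\sqrt{b}$. The squared seminorm term reproduces $\Brackets{\epsilon^{1/2}+h+\sqrt{\epsilon/h}}\Norm{u_0}_{2,\Omega}$, while $\sqrt{R_\epsilon}$ and $\sqrt{\Brackets{\epsilon+h^{3/2}}\Norm{u_0}_{2,\Omega}}=\sqrt{\epsilon\Norm{u_0}_{2,\Omega}}+h^{3/4}\sqrt{\Norm{u_0}_{2,\Omega}}$ come from the boundary term, the leftover $\sqrt{\epsilon\Norm{u_0}_{2,\Omega}}$ being absorbed into $\epsilon^{1/2}\Norm{u_0}_{2,\Omega}$. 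The main obstacle is conceptual rather than computational: the contact term enters the hemivariational C\'{e}a inequality \emph{unsquared}, so passing to the square root destroys the homogeneity in $\Norm{u_0}_{2,\Omega}$ and forces half-powers such as $h^{3/4}\sqrt{\Norm{u_0}_{2,\Omega}}$ to appear; moreover $R_\epsilon=\Norm{u_\epsilon-u_0}_{0,\Gamma_C}$ is an $L^2$ boundary convergence quantity that cannot be converted into an explicit rate without the extra regularity the authors deliberately avoid, so it is kept as is.
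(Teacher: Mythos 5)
Your strategy is essentially the paper's: feed the modified C\'{e}a inequality with \cref{thm:ehalf} and \cref{lem:uei}, treat the interior seminorm exactly as in \cref{thm:ab}, and insert $u_0$ into the contact term so that $R_\epsilon$ appears. The interior part of your argument is correct. The gap is in your handling of the boundary term. By detouring through $u_{\epsilon,1}$ you arrive at $\Norm{u_\epsilon-u_{\epsilon,\mathcal{I}}}_{0,\Gamma_C}\le R_\epsilon+C\Brackets{\epsilon+h^{3/2}}\Norm{u_0}_{2,\Omega}$, and after taking square roots you are left with the term $\epsilon^{1/2}\sqrt{\Norm{u_0}_{2,\Omega}}$, which you claim can be ``absorbed into $\epsilon^{1/2}\Norm{u_0}_{2,\Omega}$''. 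That absorption is invalid: it requires $\sqrt{\Norm{u_0}_{2,\Omega}}\le C\Norm{u_0}_{2,\Omega}$, which fails whenever $\Norm{u_0}_{2,\Omega}$ is small, and this is precisely the loss of homogeneity you yourself identify as the reason $h^{3/4}\sqrt{\Norm{u_0}_{2,\Omega}}$ must be kept as a separate term; the same logic forbids absorbing $\epsilon^{1/2}\sqrt{\Norm{u_0}_{2,\Omega}}$. Nor is it dominated by the other terms of the claimed bound: taking $\epsilon\sim h$ and $\Norm{u_0}_{2,\Omega}\sim h^{2}$, your leftover term is of order $h^{3/2}$ while $\sqrt{\epsilon/h}\,\Norm{u_0}_{2,\Omega}\sim h^{2}$ and $h^{3/4}\sqrt{\Norm{u_0}_{2,\Omega}}\sim h^{7/4}$ are both smaller. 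So as written you prove a strictly weaker statement, with an extra $\epsilon^{1/2}\sqrt{\Norm{u_0}_{2,\Omega}}$ on the right-hand side, rather than the theorem as stated.

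The fix is to avoid $u_{\epsilon,1}$ on the boundary altogether, which is what the paper does. By construction $u_{\epsilon,\mathcal{I}}=u_{0,\mathcal{I}}$ on every $\partial T$, hence on $\partial\Omega\supset\Gamma_C$, so inserting only $u_0$ gives
\[
\Norm{u_\epsilon-u_{\epsilon,\mathcal{I}}}_{0,\Gamma_C}\le\Norm{u_\epsilon-u_0}_{0,\Gamma_C}+\Norm{u_0-u_{0,\mathcal{I}}}_{0,\Gamma_C}\le R_\epsilon+C_{\mathcal{I}}h^{3/2}\Norm{u_0}_{2,\Omega},
\]
where the last step is the third estimate of \cref{prop:inter} applied directly; no $\epsilon$-term appears at all. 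Squaring the interior bound, adding this boundary bound inside the C\'{e}a inequality, and taking square roots then yields exactly the claimed right-hand side, with $\sqrt{R_\epsilon}$ and $h^{3/4}\sqrt{\Norm{u_0}_{2,\Omega}}$ and nothing left over. (Your second estimate in \cref{lem:uei} is the natural tool only when the comparison function on the boundary is $u_{\epsilon,1}$; here the comparison function should be $u_0$ itself.)
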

\begin{remark}
The proof here is clear by inserting $u_0$ into $\Norm{u_\epsilon-u_{\epsilon, \mathcal{I}}}_{0,\Gamma_C}$, and we also note by trace inequality, $R_\epsilon \leq C \Seminorm{u_\epsilon-u_{\epsilon, 1}}_{1,\Omega} + \epsilon \Norm{N_l^\epsilon \partial_l u_0}_{0, \Gamma_C} \leq C \epsilon^{1/2}\Norm{u_0}_{2,\Omega}$. However, this estimation for $R_\epsilon$ is suboptimal, and we conjecture that the optimal result is $R_\epsilon=O(\epsilon)$. Compare \cref{thm:ab,thm:c} with the first-order expansion method in \cite{Ye2019}, a resonance error $\sqrt{\epsilon/h}$ is inevitably induced because of neglecting the oscillation on inner element boundaries, and this fact triggers considerable subsequent works.
\end{remark}

In \cite{Hou1999}, the authors formally stated a sharp $L^2$-estimation without proof. Actually, since the right-hand section of our estimation in \cref{thm:ab} only depends on the $H^2$ norm of $u_0$, we can directly prove an $L^2$-estimation by the Aubin-Nitsche technique.
\begin{corollary}
Under the same assumptions in \cref{lem:criti} and suppose the domain is regular (that is, $\exists C>0, \forall F \in L^2(\Omega), \exists v_0\in H^1_{0,\Gamma_D}(\Omega) \cap H^2(\Omega)$ s.t. $\forall \phi \in H^1_{0,\Gamma_D}(\Omega)$, $\int_\Omega \hat{A}\nabla v_0\cdot \nabla \phi=\int_\Omega F \phi$ and $\Norm{v_0}_{2,\Omega} \leq C\Norm{F}_{0,\Omega}$.). For \cref{pro:a}, we have:
\[
\Norm{u_0-u_{\epsilon, \mathrm{ms}}}_{0,\Omega} \leq C\Braces{\epsilon+h^2+\frac{\epsilon}{h}} \Norm{u_0}_{2,\Omega}.
\] 
\end{corollary}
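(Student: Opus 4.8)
The plan is to run an Aubin--Nitsche duality argument \emph{against the homogenized operator}, so that the required $H^2$ regularity is supplied by the ``regular domain'' hypothesis, while Galerkin orthogonality is exploited with respect to the true operator $A^\epsilon$. Write $(\cdot,\cdot)$ for the $L^2(\Omega)$ inner product, $a_\epsilon(w,z)=\int_\Omega A^\epsilon\nabla w\cdot\nabla z$ and $a_0(w,z)=\int_\Omega\hat{A}\nabla w\cdot\nabla z$. Since $\Norm{u_0-u_{\epsilon,\mathrm{ms}}}_{0,\Omega}=\sup_{F\neq\bm 0}\Brackets{F,u_0-u_{\epsilon,\mathrm{ms}}}/\Norm{F}_{0,\Omega}$, I fix $F\in L^2(\Omega)$ and let $v_0$ be the homogenized dual solution, $a_0(v_0,\phi)=\Brackets{F,\phi}$ for all $\phi\in H^1_{0,\Gamma_D}(\Omega)$, which the regularity hypothesis endows with $\Norm{v_0}_{2,\Omega}\leq C\Norm{F}_{0,\Omega}$. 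The crucial observation is that this dual problem is itself an instance of \cref{pro:a} (source $F$, homogeneous boundary data) and that $A^\epsilon$ is symmetric, so its correctors coincide with $\{N_l\}$ and \cref{thm:ehalf,lem:uei,thm:ab} all apply verbatim to it; I denote by $v_\epsilon$ and $v_{\epsilon,\mathrm{ms}}$ its exact and MsFEM solutions.

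Next I would isolate the two error mechanisms through an algebraic identity. Testing the homogenized dual equation with $u_0$ and the MsFEM dual equation with $u_{\epsilon,\mathrm{ms}}\in V_{\mathrm{ms}}\subset H^1_{0,\Gamma_D}(\Omega)$ (an admissible discrete test function) gives $\Brackets{F,u_0}=a_0(v_0,u_0)$ and $\Brackets{F,u_{\epsilon,\mathrm{ms}}}=a_\epsilon(v_{\epsilon,\mathrm{ms}},u_{\epsilon,\mathrm{ms}})$, while testing the $\epsilon$-dual equation with $u_\epsilon$ gives $\Brackets{F,u_\epsilon}=a_\epsilon(v_\epsilon,u_\epsilon)$. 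Inserting $\pm a_\epsilon(v_\epsilon,u_\epsilon)$ yields
\[
\Brackets{F,u_0-u_{\epsilon,\mathrm{ms}}}=\underbrace{\Brackets{F,u_0-u_\epsilon}}_{(\mathrm I)}+\underbrace{\SquareBrackets{a_\epsilon(v_\epsilon,u_\epsilon)-a_\epsilon(v_{\epsilon,\mathrm{ms}},u_{\epsilon,\mathrm{ms}})}}_{(\mathrm{II})},
\]
where $(\mathrm I)=a_0(v_0,u_0)-a_\epsilon(v_\epsilon,u_\epsilon)=\Brackets{F,u_0}-\Brackets{F,u_\epsilon}$ collapses to a pure homogenization error. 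For $(\mathrm{II})$ I expand bilinearly and invoke Galerkin orthogonality of \emph{both} the primal and the dual MsFEM problems, $a_\epsilon(u_\epsilon-u_{\epsilon,\mathrm{ms}},w)=a_\epsilon(v_\epsilon-v_{\epsilon,\mathrm{ms}},w)=0$ for all $w\in V_{\mathrm{ms}}$; together with the symmetry of $A^\epsilon$ the two cross terms vanish and $(\mathrm{II})=a_\epsilon\Brackets{u_\epsilon-u_{\epsilon,\mathrm{ms}},\,v_\epsilon-v_{\epsilon,\mathrm{ms}}}$.

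To finish, I bound $(\mathrm{II})$ by Cauchy--Schwarz in $a_\epsilon$ and by the energy estimate of \cref{thm:ab} applied to primal and dual simultaneously, so that $\Seminorm{u_\epsilon-u_{\epsilon,\mathrm{ms}}}_{1,\Omega}\Seminorm{v_\epsilon-v_{\epsilon,\mathrm{ms}}}_{1,\Omega}\leq C\Brackets{\epsilon^{1/2}+h+\sqrt{\epsilon/h}}^2\Norm{u_0}_{2,\Omega}\Norm{v_0}_{2,\Omega}$; expanding the square and absorbing every cross term by AM--GM ($\sqrt{\epsilon h}\leq\tfrac12(\epsilon/h+h^2)$, $h\epsilon^{1/2}\leq\tfrac12(h^2+\epsilon)$, and $\epsilon/\sqrt h\leq\epsilon/h$ for $h\leq1$) leaves $C\Brackets{\epsilon+h^2+\epsilon/h}\Norm{u_0}_{2,\Omega}\Norm{F}_{0,\Omega}$. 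Term $(\mathrm I)$ I bound directly by $\Norm{u_\epsilon-u_0}_{0,\Omega}\Norm{F}_{0,\Omega}$ and feed in the sharp interior $L^2$ homogenization rate $\Norm{u_\epsilon-u_0}_{0,\Omega}\leq C\epsilon\Norm{u_0}_{2,\Omega}$. Dividing by $\Norm{F}_{0,\Omega}$ and taking the supremum over $F$ then yields the claim, the $\epsilon$ from $(\mathrm I)$ being dominated by the $\epsilon$ already carried by $(\mathrm{II})$.

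I expect the genuine obstacle to be term $(\mathrm I)$: unlike $(\mathrm{II})$ it is not controlled by any MsFEM result of this paper but needs the $O(\epsilon)$ \emph{interior} $L^2$ rate of periodic homogenization, which must be imported (e.g.\ from \cite{Shen2016}) and is strictly sharper than the $O(\epsilon^{1/2})$ one would get by feeding \cref{thm:ehalf} into a Poincar\'e inequality. This is precisely the interior analogue of the boundary quantity $R_\epsilon$ of \cref{thm:c}; fortunately the interior rate does not suffer the boundary-layer degradation that forces the suboptimal bound on $R_\epsilon$. The remaining points to verify are routine but must be stated: that the ``regular domain'' hypothesis truly produces an $H^2$ dual solution for the mixed boundary conditions so \cref{thm:ab} legitimately applies to $v_0$, that $u_0\in H^2(\Omega)$ as required by \cref{thm:ab}, and that the symmetry of $A^\epsilon$ is what permits reusing the correctors $\{N_l\}$ for the dual expansion.
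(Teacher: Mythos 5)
Your proposal is correct and follows essentially the same route as the paper's own proof: an Aubin--Nitsche duality argument with the dual problem posed as an instance of \cref{pro:a}, the double use of Galerkin orthogonality plus symmetry to reduce the pairing to $\int_\Omega A^\epsilon \nabla(v_\epsilon - v_{\epsilon,\mathrm{ms}})\cdot\nabla(u_\epsilon - u_{\epsilon,\mathrm{ms}})$, two applications of \cref{thm:ab}, and finally the imported $O(\epsilon)$ homogenization rate for $\Norm{u_\epsilon - u_0}_{0,\Omega}$ to absorb the term you call $(\mathrm{I})$. The only cosmetic difference is that the paper bounds $\Norm{u_\epsilon - u_{\epsilon,\mathrm{ms}}}_{0,\Omega}$ first and then applies the triangle inequality, whereas you fold that step into the duality pairing; the content is identical.
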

\begin{proof}
Take any $F\in L^2(\Omega)$, and let $v_{\epsilon,\mathrm{ms}}$ be the MsFEM solution of the PDE:
\[
\left\{
\begin{aligned}
-\Div\Brackets{A^\epsilon \nabla v_\epsilon} = F &\text{~ in ~} \Omega \\
v_\epsilon = 0 &\text{~ on ~} \Gamma_D \\
\bm{n} \cdot A^\epsilon \nabla v_\epsilon = 0 &\text{~ on ~} \Gamma_N
\end{aligned}
\right. .
\]
Then by the Galerkin orthogonality and \cref{thm:ab},
\[
\begin{aligned}
\int_\Omega F \cdot (u_\epsilon-u_{\epsilon,\mathrm{ms}}) &= \int_\Omega A^\epsilon \nabla v_\epsilon \cdot \nabla u_\epsilon - \int_\Omega A^\epsilon \nabla v_{\epsilon,\mathrm{ms}} \cdot \nabla u_{\epsilon,\mathrm{ms}} = \int_\Omega A^\epsilon \nabla (v_\epsilon-v_{\epsilon,\mathrm{ms}}) \cdot \nabla (u_\epsilon-u_{\epsilon,\mathrm{ms}}) \\
&\leq C(\epsilon^{1/2}+h+\sqrt{\epsilon/h}) \Norm{v_0}_{2,\Omega} (\epsilon^{1/2}+h+\sqrt{\epsilon/h}) \Norm{u_0}_{2,\Omega} \\
&\leq C(\epsilon+h^2+\epsilon/h) \Norm{u_0}_{2,\Omega} \Norm{F}_{0,\Omega}
\end{aligned}.
\]
This reveals $\Norm{u_\epsilon-u_{\epsilon,\mathrm{ms}}}_{0,\Omega} \leq C(\epsilon+h^2+\epsilon/h) \Norm{u_0}_{2,\Omega}$. We claim the conclusion by the fact that $\Norm{u_\epsilon-u_0}_{0,\Omega}\leq C\epsilon\Norm{u_0}_{1,\Omega}$.
\end{proof}

\section{Conclusion}\label{sec:conclusion}
In this paper, we proposed a proof for the convergence rate of MsFEM for mixed Dirichlet-Neumann, Robin and hemivariational inequality boundary problems. The key step is directly utilizing the first-order expansion rather than constructing boundary correctors, and this also leads a relaxation on assumptions. Our proof also indicates the originality of resonance error, while whether on those problems this ``annoying'' error can be reduced by over-sampling technique is still under considering. Due to the nonlinear setting, the estimation for hemivariational inequality problem is slightly verbose comparing to linear problems, and a refined result relies on further developments of homogenization theory. 

We admit that with numerical experiments implemented our theoretical analysis will be more plausible. However, since MsFEM itself has evolved considerably and its variants have been broadly applied to different areas, it seems that classic MsFEM is ``undoubtedly'' effective on various boundary problems. Our work may strengthen the understanding of how MsFEM captures small-scale information in small periodicity setting. The reason is that it successfully approximates the first-order expansion which is related to more general correctors in H-convergence theory (\cite{Tartar2010}).

\section{Acknowledgment}
This work is supported by the National Natural Science Foundation of China [51739007], the State Key Laboratory of Scientific and Engineering Computing, the China Postdoctoral Science Foundation (No. 2018M643573), and the National Natural Science Foundation of Shaanxi Province (No. 2019JQ-048). We also thank the referees for their comments and suggestions, which have lead to a significant improvement of this paper. Moreover, The author C. Ye owes his special thanks to Ms. Lin Pan for her consistent pushing and continuous support.

\bibliographystyle{siamplain}
\bibliography{references}    

\begin{thebibliography}{10}

\bibitem{Allaire2002}
{\sc G.~Allaire}, {\em Shape Optimization by the Homogenization Method},
  Springer New York, 2002, \url{https://doi.org/10.1007/978-1-4684-9286-6}.

\bibitem{Bensoussan2011}
{\sc A.~Bensoussan, J.-L. Lions, and G.~Papanicolaou}, {\em Asymptotic Analysis
  for Periodic Structures}, American Mathematical Society, oct 2011,
  \url{https://doi.org/10.1090/chel/374}.

\bibitem{Bernardi1998}
{\sc C.~Bernardi and V.~Girault}, {\em A local regularization operator for
  triangular and quadrilateral finite elements}, {SIAM} Journal on Numerical
  Analysis, 35 (1998), pp.~1893--1916,
  \url{https://doi.org/10.1137/s0036142995293766}.

\bibitem{Brenner2008}
{\sc S.~C. Brenner and L.~R. Scott}, {\em The mathematical theory of finite
  element methods}, vol.~15 of Texts in Applied Mathematics, Springer, New
  York, third~ed., 2008, \url{https://doi.org/10.1007/978-0-387-75934-0}.

\bibitem{Chen2002}
{\sc Z.~Chen and T.~Y. Hou}, {\em A mixed multiscale finite element method for
  elliptic problems with oscillating coefficients}, Mathematics of Computation,
  72 (2002), pp.~541--577, \url{https://doi.org/10.1090/s0025-5718-02-01441-2}.

\bibitem{zhimingchenhaijunwu1991}
{\sc Z.~Chen and H.~Wu}, {\em Selected Topics in Finite Element Methods (In
  English)}, Science Press Ltd., Beijing, 2010.

\bibitem{Chung2014}
{\sc E.~T. Chung, Y.~Efendiev, and G.~Li}, {\em An adaptive {GMsFEM} for
  high-contrast flow problems}, Journal of Computational Physics, 273 (2014),
  pp.~54--76, \url{https://doi.org/10.1016/j.jcp.2014.05.007}.

\bibitem{Denkowski2003}
{\sc Z.~Denkowski, S.~Mig{\'{o}}rski, and N.~S. Papageorgiou}, {\em An
  Introduction to Nonlinear Analysis: Theory}, Springer {US}, 2003,
  \url{https://doi.org/10.1007/978-1-4419-9158-4}.

\bibitem{Efendiev2013}
{\sc Y.~Efendiev, J.~Galvis, and T.~Y. Hou}, {\em Generalized multiscale finite
  element methods ({GMsFEM})}, Journal of Computational Physics, 251 (2013),
  pp.~116--135, \url{https://doi.org/10.1016/j.jcp.2013.04.045}.

\bibitem{YalchinEfendiev2009}
{\sc Y.~Efendiev and T.~Y. Hou}, {\em Multiscale Finite Element Methods},
  Springer-Verlag GmbH, 2009.

\bibitem{Efendiev2000}
{\sc Y.~R. Efendiev, T.~Y. Hou, and X.-H. Wu}, {\em Convergence of a
  nonconforming multiscale finite element method}, {SIAM} Journal on Numerical
  Analysis, 37 (2000), pp.~888--910,
  \url{https://doi.org/10.1137/s0036142997330329}.

\bibitem{LawrenceCraig2015}
{\sc L.~C. Evans and R.~F. Gariepy}, {\em Measure Theory and Fine Properties of
  Functions, Revised Edition}, Apple Academic Press Inc., 2015.

\bibitem{Han2019}
{\sc W.~Han and M.~Sofonea}, {\em Numerical analysis of hemivariational
  inequalities in contact mechanics}, Acta Numerica, 28 (2019), pp.~175--286,
  \url{https://doi.org/10.1017/s0962492919000023}.

\bibitem{Han2017}
{\sc W.~Han, M.~Sofonea, and M.~Barboteu}, {\em Numerical analysis of elliptic
  hemivariational inequalities}, {SIAM} Journal on Numerical Analysis, 55
  (2017), pp.~640--663, \url{https://doi.org/10.1137/16m1072085}.

\bibitem{Hou1997}
{\sc T.~Y. Hou and X.-H. Wu}, {\em A multiscale finite element method for
  elliptic problems in composite materials and porous media}, Journal of
  Computational Physics, 134 (1997), pp.~169--189,
  \url{https://doi.org/10.1006/jcph.1997.5682}.

\bibitem{Hou1999}
{\sc T.~Y. Hou, X.-H. Wu, and Z.~Cai}, {\em Convergence of a multiscale finite
  element method for elliptic problems with rapidly oscillating coefficients},
  Mathematics of Computation, 68 (1999), pp.~913--944,
  \url{https://doi.org/10.1090/s0025-5718-99-01077-7}.

\bibitem{Jikov1994}
{\sc V.~V. Jikov, S.~M. Kozlov, and O.~A. Oleinik}, {\em Homogenization of
  Differential Operators and Integral Functionals}, Springer Berlin Heidelberg,
  1994, \url{https://doi.org/10.1007/978-3-642-84659-5}.

\bibitem{Kenig2013a}
{\sc C.~Kenig, F.~Lin, and Z.~Shen}, {\em Periodic homogenization of green and
  neumann functions}, Communications on Pure and Applied Mathematics, 67
  (2013), pp.~1219--1262, \url{https://doi.org/10.1002/cpa.21482}.

\bibitem{Shen2016}
{\sc Z.~Shen and J.~Zhuge}, {\em Convergence rates in periodic homogenization
  of systems of elasticity}, Proceedings of the American Mathematical Society,
  145 (2016), pp.~1187--1202, \url{https://doi.org/10.1090/proc/13289}.

\bibitem{Tartar2010}
{\sc L.~Tartar}, {\em The General Theory of Homogenization}, Springer Berlin
  Heidelberg, 2010, \url{https://doi.org/10.1007/978-3-642-05195-1}.

\bibitem{Ye2019}
{\sc C.~Ye and J.~Cui}, {\em A first-order two-scale analysis for contact
  problems with small periodic configurations}, to appear,  (2019),
  \url{https://arxiv.org/abs/1902.01018v3}.

\end{thebibliography}
\end{document}